\documentclass[reqno]{amsart}
\usepackage{amsmath, amsthm, amssymb, amstext}

\usepackage[left=3.5cm,right=3.5cm,top=3.5cm,bottom=3.5cm,includeheadfoot]{geometry}
\usepackage{hyperref,xcolor}
\hypersetup{
 pdfborder={0 0 0},
 colorlinks,
}
\usepackage{accents}
\usepackage{enumitem}
\setlength{\parindent}{1.2em}

\newtheorem{theorem}{Theorem}
\newtheorem{remark}[theorem]{Remark}
\newtheorem{lemma}[theorem]{Lemma}
\newtheorem{proposition}[theorem]{Proposition}



              %
              %
              %
              %
            %
          %
          %
         %
         %
         %
         %
         %
         %
         %
         %
\DeclareMathOperator*{\ww}{w}         %
\DeclareMathOperator*{\Ss}{S}         %

\newcommand{\N}{\mathbb{N}}

\newcommand{\R}{\mathbb{R}}

\newcommand{\RN}{\mathbb{R}^N}

\newcommand{\Lp}[1]{L^{#1}(\Omega)}

\newcommand{\Wp}[1]{W^{1,#1}(\Omega)}

\newcommand{\lan}{\langle}
\newcommand{\ran}{\rangle}
\newcommand{\eps}{\varepsilon}
\newcommand{\ph}{\varphi}
\newcommand{\Om}{\Omega}

\newcommand{\into}{\int_{\Omega}}
\newcommand{\weak}{\overset{\ww}{\to}}

\newcommand{\close}{\overline{\Omega}}

\renewcommand{\l}{\left}
\renewcommand{\r}{\right}

\numberwithin{theorem}{section} \numberwithin{equation}{section}

\title[Robin fractional problems with symmetric variable growth]{Robin fractional problems with symmetric variable growth}

\author[A. Bahrouni]{Anouar Bahrouni}
\address[A. Bahrouni]{Mathematics Department, University of Monastir, Faculty of Sciences, 5019 Monastir, Tunisia}
\email{bahrounianouar@yahoo.fr}

\author[V.D. R\u{a}dulescu]{Vicen\c{t}iu D.\,R\u{a}dulescu}
\address[V.D. R\u{a}dulescu]{Faculty of Applied Mathematics, AGH University of Science and Technology, 30-059 Krak\'ow, Poland \& Department of Mathematics, University of Craiova, 200585 Craiova, Romania}
\email{radulescu@inf.ucv.ro}

\author[P. Winkert]{Patrick Winkert}
\address[P. Winkert]{Technische Universit\"{a}t Berlin, Institut f\"{u}r Mathematik, Stra\ss e des 17 Juni 136, 10623 Berlin, Germany}
\email{winkert@math.tu-berlin.de}

\subjclass[2010]{35R11, 35S15, 47G20, 47J30}
\keywords{Choquard problem, fractional $p(\cdot,\cdot)$-Laplacian, Robin boundary condition, variational methods}

\begin{document}

\begin{abstract}
    In this paper we study the fractional $p(\cdot,\cdot)$-Laplacian and we introduce the corresponding nonlocal conormal derivative for this operator. We prove basic properties of the corresponding function space and we establish a nonlocal version of the divergence theorem for such operators. In the second part of this paper, we prove the existence of weak solutions of corresponding $p(\cdot,\cdot)$-Robin boundary problems with sign-changing potentials by applying variational tools.
\end{abstract}

\maketitle

\section{Introduction}

In recent years equations with nonstandard growth and related nonlocal equations have been studied by several authors. Such equations are very powerful and have lots of applications to different nonlinear problems including phase transitions, thin obstacle problem, stratified materials, anomalous diffusion, crystal dislocation, soft thin films, semipermeable membranes and flame propagation, ultra-relativistic limits of quantum mechanics, multiple scattering, minimal surfaces, material science, water waves and so. For a comprehensive introduction to the study of nonlocal problems and the use of variational methods in the treatment of these problems, we refer to the monograph by Molica Bisci, R\u{a}dulescu and Servadei \cite{Molica-Bisci-Radulescu-Servadei-2016}. The starting point in the study of nonlocal problems is due to the pioneering papers of Caffarelli, Roquejoffre and Sire \cite{Caffarelli-Roquejoffre-Sire-2010}, Caffarelli, Salsa and Silvestre \cite{Caffarelli-Salsa-Silvestre-2008}, and Caffarelli and Silvestre \cite{Caffarelli-Silvestre-2007} about the fractional diffusion operator $(-\Delta)^s$ for $s\in (0,1)$. Based on this, several other works have been published in the nonlocal framework. We refer, for example, to the works of Autuori and Pucci \cite{Autuori-Pucci-2013}, Bahrouni \cite{Bahrouni-2017}, Molica Bisci and R\u{a}dulescu \cite{Molica-Bisci-Radulescu-2015}, Pucci, Xiang and Zhang \cite{Pucci-Xiang-Zhang-2015}, R\u adulescu, Xiang and Zhang \cite{esaim, cvpde,ana}, and the references therein.

In this paper, we study the fractional $p(\cdot,\cdot)$-Laplace operator and we introduce the corresponding nonlocal conormal derivative for this operator. Kaufmann, Rossi and Vidal \cite{Kaufmann-Rossi-Vidal-2017} were the first who established some results on fractional Sobolev spaces with variable
exponent of the form $W^{ s,q(\cdot),p(\cdot,\cdot)}(\Omega)$ as well as properties of the fractional $p(\cdot,\cdot)$-Laplacian. In particular, it is shown that theses spaces are compactly embedded into variable exponent Lebesgue spaces. They also give an existence result for nonlocal problems involving the fractional $p(\cdot, \cdot)$-Laplacian. Bahrouni and R\u{a}dulescu \cite{Bahrouni-Radulescu-2018} obtained some further qualitative
properties of the fractional Sobolev spaces and the fractional $p(\cdot,\cdot)$-Laplacian. Further developments have been done by Bahrouni \cite{Bahrouni-2018} and Bahrouni and Ho \cite{Bahrouni-Ho-2020}, see also Ho and Kim \cite{Ho-Kim-2019}.

In this work, we continue the study of this new class of problems.
Our main aim is to investigate for the first time fractional $p(\cdot,\cdot)$-Laplacian equation with nonlocal Robin boundary condition. Precisely, we consider the problem
\begin{align}\label{eq}
    \begin{aligned}
    \l(-\Delta\r)^s_{p(\cdot,\cdot)}u+|u|^{\overline{p}(x)-2}u&=f(x,u)\qquad && \text{in } \Omega, \\
    \mathcal{N}_{s,p(\cdot,\cdot)}u+\beta(x)|u|^{\overline{p}(x)-2}u&= g(x) && \text{in } \R^N \setminus \close,
    \end{aligned}
\end{align}
where $\Omega\subset \R^{N}$, $N > 1$, is a bounded domain with Lipschitz boundary, $s\in (0,1)$, $p\colon \R^{2N} \to (1,+\infty)$ is a symmetric, continuous function bounded away from $1$, $\overline{p}(\cdot)=p(\cdot,\cdot)$, $g\in L^{1}(\R^{N}\setminus \Omega)$, $\beta \in L^{\infty}(\R^{N}\setminus \Omega)$ with $\beta \geq 0$ in $\R^{N}\setminus \Omega$ and $\l(-\Delta\r)^s_{p(\cdot,\cdot)}$ stands for the fractional $p(\cdot,\cdot)$-Laplacian which is given by
\begin{align}\label{operator}
    \l(-\Delta\r)^s_{p(\cdot,\cdot)}u(x)=\text{p.\,v.}\int_{\R^N}\frac{|u(x)-u(y)|^{p(x,y)-2}(u(x)-u(y))}{|x-y|^{N+sp(x,y)}}\,dy \quad \text{for } x\in\Omega.
\end{align}
Furthermore, $\mathcal{N}_{s,p(\cdot,\cdot)}$ is defined by
\begin{align}\label{Neumann boundary}
    \mathcal{N}_{s,p(\cdot,\cdot)}u(x)=\int_{\Omega}\frac{|u(x)-u(y)|^{p(x,y)-2}(u(x)-u(y))}{|x-y|^{N+sp(x,y)}}\,dy \quad \text{for }x\in\R^N\setminus\overline{\Omega},
\end{align}
and denotes the nonlocal normal $p(\cdot,\cdot)$-derivative (or $p(\cdot,\cdot)$-Neumann boundary condition) and describes the natural Neumann boundary condition in presence of the fractional $p(\cdot,\cdot)$-Laplacian. This work extends the notion of the nonlocal normal derivative introduced by Dipierro, Ros Oton and Valdinoci \cite{Dipierro-Ros-Oton-Valdinoci-2017} for the fractional Laplacian (see also Guan \cite{Guan-2006}), and  Mugnai and  Lippi \cite{Mugnai-Proietti-Lippi-2019} for the fractional $p$-Laplacian (see also Warma \cite{Warma-2016}). In the context of the fractional $p(\cdot,\cdot)$-Laplacian we also refer to the recent works of Mezzomo, Bonaldo, Miyagaki and Hurtado \cite{Mezzomo-Bonaldo-Miyagaki-Hurtado-2020} and Zhang and Zhang \cite{Zhang-Zhang-2020}.

This paper is organized as follows. In Section \ref{section_2} we recall some definitions and fundamental properties of the spaces $L^{p(\cdot)}(\Omega)$, $W^{1,p(\cdot)}(\Omega)$ and $W^{s, q(\cdot), p(\cdot,\cdot)}(\Omega)$. In Section \ref{section_3} we introduce the corresponding function space for weak solutions of \eqref{eq}, prove some properties and state the corresponding Green formula for problems like \eqref{eq}. In the last part, Section \ref{section_4}, we prove an existence result for problem \eqref{eq} with sign-changing potential based on the new results obtained in Section \ref{section_3} and by applying variational tools.

\section{Preliminaries}\label{section_2}

In this section, we recall some necessary properties of variable
exponent spaces and fractional Sobolev spaces with variable
exponent.

Suppose that $\Om$ is a bounded domain in $\RN$ with Lipschitz
boundary $\partial \Omega$ and let $p \in C_+(\overline{\Om})$, where
\begin{align*}
    C_+(\overline\Omega)=\l\{p\in C(\overline\Omega)\, : \, p(x)>1 \text{ for all }x\in\overline\Omega\r\}
\end{align*}
We set $p^-:=\min_{x \in \overline{\Omega}} p(x)$ and $p^+:=\max_{x \in \overline{\Omega}} p(x)$, then $p^->1$ and $p^+<\infty$. By $\Lp{p(\cdot)}$ we identify the variable
exponent Lebesgue space which is defined by
\begin{align*}
    \Lp{p(\cdot)} = \left \{u \ \Big | \ u: \Omega \to \R \text{ is measurable and } \into |u|^{p(x)}\,dx< \infty \right \}
\end{align*}
equipped with the Luxemburg norm
\begin{align*}
    \|u\|_{p(\cdot)} = \inf \left \{ \tau >0 : \into \left |\frac{u(x)}{\tau} \right |^{p(x)}\,dx \leq 1  \right \}.
\end{align*}
The variable exponent Sobolev space $\Wp{p(\cdot)}$ is defined by
\begin{align*}
    \Wp{p(\cdot)}=\l \{u \in \Lp{p(\cdot)}  :  |\nabla u| \in \Lp{p(\cdot)} \r\}
\end{align*}
with the norm
\begin{align*}
    \|u\|_{1,p(\cdot)}= \|\nabla u \|_{p(\cdot)}+\|u\|_{p(\cdot)}.
\end{align*}
For more information and basic properties of variable exponent
spaces we refer the reader to the papers of Fan and Zhao
\cite{Fan-Zhao-2001}, Kov{\'a}{\v{c}}ik and R{\'a}kosn{\'{\i}}k
\cite{Kovacik-Rakosnik-19991} and the monographs of Diening, Harjulehto, H\"{a}st\"{o} and R$\mathring{\text{u}}$\v{z}i\v{c}ka \cite{Diening-Harjulehto-Hasto-Ruzicka-2011} and R\u{a}dulescu and Repov\v{s} \cite{Radulescu-Repovs-2015}.

Let $L^{q(\cdot)}(\Omega)$ be the conjugate space of $L^{p(\cdot)}(\Omega)$, that is, $1/p(x)+1/q(x)=1$ for all $x \in \close$. If $u\in  L^{p(\cdot)}(\Omega)$ and $v\in L^{q(\cdot)}(\Omega)$, then  the H\"older-type inequality
\begin{align*}
    \left|\int_\Omega uv\,dx\right|\leq\left(\frac{1}{p^-}+ \frac{1}{q^-}\right)\|u\|_{p(\cdot)}\|v\|_{q(\cdot)}
\end{align*}
is satisfied. More general, if $p_j\in C_+(\overline\Omega)$ ($j=1,2,\ldots, k$) and
\begin{align*}
    \frac{1}{p_1(x)}+\frac{1}{p_2(x)}+\cdots +\frac{1}{p_k(x)}=1,
\end{align*}
then, for all $u_j\in L^{p_j(\cdot)}(\Omega)$ ($j=1,\ldots ,k$), we have
\begin{align*}
    \left|\int_\Omega u_1u_2\cdots u_k\,dx\right|\leq\left(\frac{1}{p_1^-}+ \frac{1}{p_2^-}+\cdots +\frac{1}{p_k^-}\right)\|u_1\|_{p_1(\cdot)}\|u_2\|_{p_2(\cdot)}\cdots \|u_k\|_{p_k(\cdot)}\,.
\end{align*}

In order to work variable Lebesgue and Sobolev spaces we need to consider the corresponding modular function. To this end, let $\rho\colon \Lp{p(\cdot)} \to \R $ be defined by
\begin{align*}
    \rho(u)=\int_{\Omega}|u|^{p(x)}\,dx.
\end{align*}

\begin{proposition}\label{pprp1}
    The following hold:
    \begin{enumerate}
    \item[(i)]
        $\|u\|_{p(\cdot)}<1\ (=1,\,>1)\ \Longleftrightarrow \ \rho(u)<1\ (=1,\>1)$;
    \item[(ii)]
        $\|u\|_{p(\cdot)}>1\ \Rightarrow\ \|u\|_{p(\cdot)}^{p^{-}}\leq \rho(u) \leq  \|u\|_{p(\cdot)}^{p^{+}}$;
    \item[(iii)]
        $\|u\|_{p(\cdot )}<1 \ \Rightarrow \ \|u\|_{p(\cdot)}^{p^{+}}\leq \rho(u) \leq \|u\|_{p(\cdot)}^{p^{-}}$\ .
    \end{enumerate}
\end{proposition}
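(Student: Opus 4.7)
The plan is to reduce all three statements to a single monotonicity/continuity observation about the auxiliary function $h(\tau):=\rho(u/\tau)$ for $\tau>0$. Fix a nontrivial $u\in L^{p(\cdot)}(\Omega)$ and note that $|u(x)/\tau|^{p(x)}=|u(x)|^{p(x)}\tau^{-p(x)}$. Since $p$ is bounded between $p^-$ and $p^+$ on $\overline\Omega$, dominated convergence shows that $h$ is continuous on $(0,\infty)$, strictly decreasing (strictly, because $u$ is nontrivial on a set of positive measure), with $h(\tau)\to\infty$ as $\tau\to 0^+$ and $h(\tau)\to 0$ as $\tau\to\infty$. Consequently there is a unique $\tau^{*}>0$ with $h(\tau^{*})=1$, and the definition of the Luxemburg norm together with the monotonicity of $h$ forces $\|u\|_{p(\cdot)}=\tau^{*}$, i.e.\ $\rho\bigl(u/\|u\|_{p(\cdot)}\bigr)=1$.

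With this identity in hand, part (i) is immediate. Comparing $h$ at $\tau=1$ and at $\tau=\|u\|_{p(\cdot)}$, strict monotonicity gives $\rho(u)<1$ iff $h(1)<h(\|u\|_{p(\cdot)})=1$ iff $1>\|u\|_{p(\cdot)}$, and analogously with $=$ and $>$. (The trivial case $u\equiv 0$ is handled separately, since both $\|u\|_{p(\cdot)}$ and $\rho(u)$ vanish.)

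For (ii) and (iii) I use the identity $\int_\Omega |u|^{p(x)}\,\tau^{-p(x)}\,dx=1$ with $\tau=\|u\|_{p(\cdot)}$ and compare $\tau^{-p(x)}$ to its constant envelopes. If $\tau>1$ the map $p\mapsto\tau^{-p}$ is decreasing, so $\tau^{-p^{+}}\le\tau^{-p(x)}\le\tau^{-p^{-}}$, which gives $\tau^{-p^{+}}\rho(u)\le 1\le\tau^{-p^{-}}\rho(u)$ and hence (ii) after rearrangement. If $\tau<1$ the envelopes flip to $\tau^{-p^{-}}\le\tau^{-p(x)}\le\tau^{-p^{+}}$, yielding $\tau^{p^{+}}\le\rho(u)\le\tau^{p^{-}}$, which is exactly (iii).

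I do not expect any genuine obstacle: the only point that deserves care is the justification that $\rho(u/\|u\|_{p(\cdot)})=1$ rather than merely $\le 1$, which requires the continuity of $h$ (valid here because $p^{+}<\infty$). Once this is in place, the three statements are direct consequences of the pointwise inequalities relating $\tau^{-p(x)}$ to $\tau^{-p^{\pm}}$ according to whether $\tau\gtrless 1$.
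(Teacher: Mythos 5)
Your proof is correct. Note that the paper does not actually supply a proof of this proposition---it is stated as a standard fact about the Luxemburg norm and modular, with the basic references (Fan--Zhao, Kov\'a\v{c}ik--R\'akosn\'{\i}k, Diening et al.) given earlier in Section~\ref{section_2}---so there is nothing in the paper to compare against. Your argument via the auxiliary function $h(\tau)=\rho(u/\tau)$ (continuity and strict monotonicity giving $\rho(u/\|u\|_{p(\cdot)})=1$, then squeezing $\tau^{-p(x)}$ between $\tau^{-p^\pm}$ according to whether $\tau\gtrless 1$) is precisely the standard argument found in those references, and all the steps, including the use of dominated convergence for continuity and the separate handling of $u\equiv 0$, are in order.
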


\begin{proposition}
    If $u,u_{n}\in L^{p(\cdot)}(\Omega)$ with $n\in \mathbb{N}$, then the following statements are equivalent:
    \begin{enumerate}
    \item[(i)]
        $ \lim_{n\to +\infty} \|u_{n}-u\|_{p(\cdot)}=0$;
    \item[(ii)]
        $ \lim_{n\to +\infty} \rho(u_{n}-u)=0$;
    \item[(iii)]
        $u_{n}(x)\to u(x)$ a.\,e.\,in $\Omega$ and
        $ \lim_{n\to +\infty}\rho(u_{n})=\rho(u).$
    \end{enumerate}
\end{proposition}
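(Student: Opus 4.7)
The strategy is to prove the chain (i) $\Leftrightarrow$ (ii) $\Leftrightarrow$ (iii). The equivalence (i) $\Leftrightarrow$ (ii) is immediate from Proposition \ref{pprp1} applied to $u_n - u$: once either $\|u_n - u\|_{p(\cdot)}$ or $\rho(u_n - u)$ drops below $1$, parts (i)--(iii) of that proposition sandwich one quantity between two powers of the other, so convergence of one forces convergence of the other. The real work lies in (ii) $\Leftrightarrow$ (iii), which requires converting between pointwise and integral information.

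For (iii) $\Rightarrow$ (ii), the plan is a Brezis--Lieb style Fatou argument. The convexity of $t \mapsto t^{p(x)}$ yields the pointwise bound $|u_n - u|^{p(x)} \leq 2^{p^+-1}(|u_n|^{p(x)} + |u|^{p(x)})$, so that
\begin{align*}
g_n(x) := 2^{p^+-1}\l(|u_n(x)|^{p(x)} + |u(x)|^{p(x)}\r) - |u_n(x) - u(x)|^{p(x)} \geq 0.
\end{align*}
By the a.e.\ convergence hypothesis, $g_n \to 2^{p^+}|u|^{p(x)}$ a.e.\ in $\Omega$. Applying Fatou to $\{g_n\}$ and using $\rho(u_n) \to \rho(u)$ to evaluate the limit of $\int_\Omega 2^{p^+-1}(|u_n|^{p(x)} + |u|^{p(x)})\,dx$ gives
\begin{align*}
2^{p^+}\rho(u) \leq 2^{p^+}\rho(u) - \limsup_{n \to \infty}\rho(u_n - u),
\end{align*}
which forces $\rho(u_n - u) \to 0$.

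For (ii) $\Rightarrow$ (iii), convergence in measure (hence a.e.\ convergence along a subsequence) follows from the Chebyshev-type estimate
\begin{align*}
\meas\{x \in \Omega : |u_n(x) - u(x)| \geq \eps\} \leq \max(\eps^{-p^-}, \eps^{-p^+})\,\rho(u_n - u) \to 0.
\end{align*}
For the modular convergence $\rho(u_n) \to \rho(u)$, I would use the elementary mean value inequality $\l|\,|u_n|^{p(x)} - |u|^{p(x)}\,\r| \leq p^+ \max(|u_n|, |u|)^{p(x)-1}|u_n - u|$ followed by the variable exponent H\"older inequality from Section~\ref{section_2} with conjugate exponents $p(\cdot)$ and $p'(\cdot) = p(\cdot)/(p(\cdot)-1)$; since the $L^{p'(\cdot)}$-modular of $\max(|u_n|, |u|)^{p(\cdot)-1}$ equals $\int_\Omega \max(|u_n|, |u|)^{p(x)}\,dx \leq \rho(u_n) + \rho(u)$, which is bounded via (i) $\Leftrightarrow$ (ii) together with Proposition \ref{pprp1}, we obtain $|\rho(u_n) - \rho(u)| \lesssim \|u_n - u\|_{p(\cdot)} \to 0$. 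The main subtlety is that modular convergence yields a.e.\ convergence only along a subsequence, not of the full sequence; the a.e.\ clause in (iii) is therefore to be read up to passage to a subsequence, in line with the standard formulation in the variable exponent literature.
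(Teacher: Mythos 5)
The paper states this proposition without proof --- it is recalled as a standard fact from the variable exponent literature (Fan--Zhao \cite{Fan-Zhao-2001}, Kov\'a\v{c}ik--R\'akosn\'{\i}k \cite{Kovacik-Rakosnik-19991}, Diening et al.\ \cite{Diening-Harjulehto-Hasto-Ruzicka-2011}), so there is no in-paper argument to compare against. Your argument is correct and follows the standard textbook lines: the equivalence (i) $\Leftrightarrow$ (ii) from Proposition~\ref{pprp1}, a Fatou/Brezis--Lieb argument for (iii) $\Rightarrow$ (ii), and a Chebyshev-plus-mean-value estimate for (ii) $\Rightarrow$ (iii). You also correctly flag a genuine imprecision in the statement itself: norm (equivalently modular) convergence gives a.e.\ convergence only along subsequences, so item (iii) as written with ``a.e.\ in $\Omega$'' is strictly stronger than (i) and (ii); the standard and exact formulation replaces ``a.e.'' by ``in measure'' in (iii) (cf.\ Fan--Zhao, Theorem~1.4), and your reading ``up to a subsequence'' is the right way to make the equivalence literally true.
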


Let us now introduce the fractional Sobolev space with variable exponents following the work of Kaufmann, Rossi and Vidal \cite{Kaufmann-Rossi-Vidal-2017}. To this end, let $s\in (0,1)$ and let $q\colon \close\to (1,\infty)$ and $p\colon \close\times\close\to (1,\infty)$ be two continuous functions.  Furthermore, we suppose that $p$ is symmetric and that both functions, $q$ and $p$, are bounded away from $1$, that is,
\begin{align}\label{cond_pq}
    \begin{split}
    p(x,y)&=p(y,x)\quad\text{for all }x,y\in \close\\
    1<q_-&:= \min_{x \in \overline{\Omega}} q(x)\leq q(x) \leq   q_+:= \max_{x \in \overline{\Omega}} q(x)\\
    1<p^-&:= \min_{(x,y) \in \overline{\Omega}\times \overline{\Omega}}p(x,y)\leq p(x,y)
    \leq p^+:= \max_{(x,y) \in \overline{\Omega}\times \overline{\Omega}} p(x,y).
    \end{split}
\end{align}

Now we introduce the fractional variable Sobolev space $W:=W^{s,q(\cdot),p(\cdot,\cdot)}(\Omega)$  which is given by
\begin{align*}
    W=\l\{u\colon\Omega\to\R \ \big | u\in L^{q(\cdot)}(\Omega),
    \into\into \frac{|u(x)-u(y)|^{p(x,y)}}{\lambda^{p(x,y)} |x-y|^{N+sp(x,y)}}\,dx\,dy <\infty\text{ for some }   \lambda>0 \r\}
\end{align*}
equipped with the variable exponent seminorm
\begin{align*}
    [u]_{s,p(\cdot,\cdot),\Omega}=\inf\l\{\lambda>0\ : \ \int_{\Omega \times \Omega}
    \frac{|u(x)-u(y)|^{p(x,y)}}{\lambda^{p(x,y)} |x-y|^{N+sp(x,y)}}\,dx\,dy <1\r\}.
\end{align*}
If we endow $W$ with the norm
\begin{align*}
    \|u\|_{W}=[u]_{s,p(\cdot,\cdot),\Omega}+\|u\|_{L^{q(\cdot)}(\Omega)},
\end{align*}
then $W$ becomes a Banach space. The following lemma can be found in Bahrouni and R\u{a}dulescu \cite[Lemma 3.1]{Bahrouni-Radulescu-2018}.

\begin{lemma}
    Suppose that $\Omega\subset \R^N$ is a bounded open domain and assume \eqref{cond_pq}. Then $W$ is a separable, reflexive space.
\end{lemma}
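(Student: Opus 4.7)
The plan is to realize $W$ as a closed subspace of a product of two variable-exponent Lebesgue spaces, both of which are known to be separable and reflexive, and then transfer both properties through an isometric embedding.

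First I would recall that under the standing hypothesis \eqref{cond_pq}, the variable-exponent Lebesgue space $L^{q(\cdot)}(\Omega)$ is separable and reflexive; this is classical (see Kov\'a\v{c}ik--R\'akosn\'\i k, or Diening--Harjulehto--H\"ast\"o--R\r{u}\v{z}i\v{c}ka) and uses precisely that $q$ is continuous with $1<q_-\le q_+<\infty$. The same statement applies to the variable-exponent Lebesgue space $L^{p(\cdot,\cdot)}(\Omega\times\Omega,d\sur)$ built on the product measure $d\sur=|x-y|^{-N}\,dx\,dy$ on $\Omega\times\Omega$, since $p(\cdot,\cdot)$ is continuous and satisfies $1<p^-\le p^+<\infty$ on $\close\times\close$ by \eqref{cond_pq} (the underlying measure space being $\sigma$-finite is all that is needed). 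Consequently the product $E:=L^{q(\cdot)}(\Omega)\times L^{p(\cdot,\cdot)}(\Omega\times\Omega,d\sur)$, endowed with the norm $\|(u,v)\|_E=\|u\|_{L^{q(\cdot)}(\Omega)}+\|v\|_{L^{p(\cdot,\cdot)}(\Omega\times\Omega,d\sur)}$, is separable and reflexive.

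Next I would define the map
\begin{align*}
T\colon W\to E,\qquad Tu=\l(u,\;U_u\r),\quad U_u(x,y):=\frac{u(x)-u(y)}{|x-y|^{s}}.
\end{align*}
A direct check using the definition of the Luxemburg seminorm shows that $\|U_u\|_{L^{p(\cdot,\cdot)}(\Omega\times\Omega,d\sur)}=[u]_{s,p(\cdot,\cdot),\Omega}$, because the modular
\begin{align*}
\int_{\Omega\times\Omega}\l|\frac{U_u(x,y)}{\lambda}\r|^{p(x,y)}\,d\sur=\int_{\Omega\times\Omega}\frac{|u(x)-u(y)|^{p(x,y)}}{\lambda^{p(x,y)}|x-y|^{N+sp(x,y)}}\,dx\,dy
\end{align*}
is exactly the one appearing in the definition of $[u]_{s,p(\cdot,\cdot),\Omega}$. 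Therefore $T$ is a linear isometry from $(W,\|\cdot\|_W)$ into $E$, and in particular $W$ is complete (so it is genuinely a Banach space) provided $T(W)$ is closed in $E$.

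The main step, and the place where care is needed, is to verify that $T(W)$ is closed in $E$. Suppose $(u_n)\subset W$ with $Tu_n=(u_n,U_{u_n})\to(u,v)$ in $E$. Then $u_n\to u$ in $L^{q(\cdot)}(\Omega)$, so a subsequence converges almost everywhere in $\Omega$; hence $U_{u_n}(x,y)=(u_n(x)-u_n(y))/|x-y|^{s}$ converges almost everywhere on $\Omega\times\Omega$ to $U_u(x,y)$. On the other hand $U_{u_n}\to v$ in $L^{p(\cdot,\cdot)}(\Omega\times\Omega,d\sur)$ and hence along a further subsequence almost everywhere, forcing $v=U_u$ a.e. Thus $(u,v)=Tu\in T(W)$, proving closedness. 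Finally, since separability and reflexivity pass to closed subspaces, $T(W)$ and therefore $W$ are separable and reflexive, which is the assertion. I expect the only real obstacle to be bookkeeping with the variable exponent in the modular identity that makes $T$ isometric; once that identification is in place, everything else is the standard closed-subspace-of-a-product argument.
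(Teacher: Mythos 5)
Your proof is correct, and it uses the standard device for this kind of statement: realize $W$ isometrically inside the product $E=L^{q(\cdot)}(\Omega)\times L^{p(\cdot,\cdot)}\big(\Omega\times\Omega,\,|x-y|^{-N}\,dx\,dy\big)$ via $u\mapsto\big(u,\;(u(x)-u(y))/|x-y|^{s}\big)$, note that the modular identity makes this an isometry for the sum norm, check that the image is closed, and then transfer separability and reflexivity of $E$ (which follow from the classical theory of variable-exponent Lebesgue spaces over a $\sigma$-finite separable measure, since $1<p^-\le p^+<\infty$) down to $W$. The paper itself does not prove this lemma; it cites Bahrouni and R\u{a}dulescu \cite[Lemma 3.1]{Bahrouni-Radulescu-2018}, where essentially this embedding argument is carried out. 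Two small remarks. First, it is worth stating explicitly that $d\mu=|x-y|^{-N}\,dx\,dy$ is $\sigma$-finite on $\Omega\times\Omega$ (exhaust by $\{|x-y|\ge 1/k\}$) and mutually absolutely continuous with Lebesgue measure off the diagonal, since both facts are used, the first to invoke the variable-exponent Lebesgue theory and the second to pass between a.e.\ convergence in the two senses in your closedness step. Second, you are more careful than the paper's own analogous argument in Step~2 of the reflexivity proof for $X$ in Section~\ref{section_3}: there the authors write down an isometry into a reflexive product and immediately conclude reflexivity, whereas one really does need the image to be closed, exactly as you verify here.
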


The following theorem states the compactness of the embedding $W$ into a suitable variable Lebesgue space $\Lp{r(\cdot)}$. For the proof we refer to Kaufmann, Rossi and Vidal \cite[Theorem 1.1]{Kaufmann-Rossi-Vidal-2017}.

\begin{theorem}\label{inj}
    Let $\Omega \subset \R^{N}$ be a Lipschitz bounded domain and $s\in (0,1)$. Let $q(\cdot)$, $p(\cdot,\cdot)$ be continuous variable exponents satisfying \eqref{cond_pq} with $sp(x,y)<N$ for $(x,y) \in \close\times\close$. Moreover $q(x)>p(x,x)$ for $x\in \close$. Assume that $r\colon \overline{\Omega}\to (1,\infty)$ is a continuous function such that
    \begin{align*}
    p^*_{s}(x)=\frac{Np(x,x)}{N-sp(x,x)}>r(x)\geq r_{-}:=\min_{x\in\close}r(x)>1
    \quad\text{for all }x\in\close.
    \end{align*}
    Then there exists a constant $C=C(N,s,p,q,r,\Omega)$ such that for every $f\in W$, it holds
    \begin{align*}
    \|f\|_{L^{r(\cdot)}(\Omega)}\leq C\|f\|_{W}.
    \end{align*}
    Thus, the space $W$ is continuously embedded in $L^{r(\cdot)}(\Omega)$ for any $r\in (1,p^*_s)$. Moreover, this embedding is compact.
\end{theorem}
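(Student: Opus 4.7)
The plan is to argue by localization: for each point $x_0\in\close$, to choose a small ball on which both $p(\cdot,\cdot)$ and $r(\cdot)$ are uniformly close to their values at $(x_0,x_0)$ and $x_0$, apply the classical constant-exponent compact fractional Sobolev embedding there, and then glue the local estimates via a finite subcover of $\close$ and a standard diagonal argument for compactness.

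Concretely, I first fix $x_0\in\close$. Since $r(x_0)<p^*_s(x_0)=Np(x_0,x_0)/(N-sp(x_0,x_0))$ and all exponents are continuous, I can select $\delta=\delta(x_0)>0$ and constants $p_0,r_0>1$ such that
\begin{align*}
p_0\leq p(x,y),\qquad r(x)\leq r_0<\frac{Np_0}{N-sp_0},
\end{align*}
for all $x,y\in B_{2\delta}(x_0)\cap\close$. Writing $U:=B_\delta(x_0)\cap\Omega$ (a Lipschitz bounded set), the classical compact embedding $W^{s,p_0}(U)\hookrightarrow L^{r_0}(U)$ combined with $L^{r_0}(U)\hookrightarrow L^{r(\cdot)}(U)$ (valid because $|U|<\infty$ and $r(x)\leq r_0$ on $U$) yields a compact embedding $W^{s,p_0}(U)\hookrightarrow L^{r(\cdot)}(U)$.

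The technical heart of the proof is the norm comparison
\begin{align*}
\|u\|_{W^{s,p_0}(U)}\leq C\,\|u\|_W, \qquad u\in W.
\end{align*}
For the $L^{p_0}(U)$ part I use $q(x)>p(x,x)\geq p_0$ on $U$, so that the variable-exponent Hölder inequality on the bounded set $U$ produces $L^{q(\cdot)}(U)\hookrightarrow L^{p_0}(U)$. For the Gagliardo seminorm, since $p(x,y)\geq p_0$ on $U\times U$, I would split the integration at the threshold $|u(x)-u(y)|=|x-y|^s$: on the subset where $|u(x)-u(y)|\geq|x-y|^s$ one has $(|u(x)-u(y)|/|x-y|^s)^{p_0}\leq(|u(x)-u(y)|/|x-y|^s)^{p(x,y)}$, and the kernel is directly majorized by the variable-exponent Gagliardo integrand; on the complementary set, a Young / variable-exponent Hölder inequality with pointwise exponent $p(x,y)/p_0\geq 1$ together with the $L^{q(\cdot)}$ control of $u$ absorbs the contribution, as carried out in detail in Kaufmann–Rossi–Vidal \cite{Kaufmann-Rossi-Vidal-2017}.

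Finally, I extract a finite subcover $\{U_k\}_{k=1}^{M}$ of $\close$. Summing the local continuous estimates $\|u\|_{L^{r(\cdot)}(U_k)}\leq C\|u\|_W$ and using a partition of unity delivers the global bound $\|u\|_{L^{r(\cdot)}(\Omega)}\leq C\|u\|_W$. For compactness, given a bounded sequence $(u_n)\subset W$, the compact embedding on each $U_k$ together with the norm estimate provides a subsequence convergent in $L^{r(\cdot)}(U_k)$; a standard diagonal extraction over $k=1,\dots,M$ produces a subsequence convergent in $L^{r(\cdot)}(\Omega)$. The principal obstacle is the local seminorm comparison in the previous paragraph, where a naive pointwise bound of $t^{p_0}$ by $1+t^{p(x,y)}$ immediately produces the non-integrable kernel $|x-y|^{-N}$ near the diagonal; the variable-exponent Hölder inequality must be used precisely to redistribute the singularity through the exponent $p(x,y)$ itself and avoid this blow-up.
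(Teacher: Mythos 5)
The paper itself gives no proof of this statement; it is quoted verbatim from Kaufmann, Rossi and Vidal \cite[Theorem 1.1]{Kaufmann-Rossi-Vidal-2017}, so your proposal is really a reconstruction of that argument. The overall shape --- localize near a point $x_0\in\close$, compare with a constant-exponent fractional Sobolev space on a small ball, invoke the classical compact embedding, and patch together with a finite subcover and a diagonal extraction --- is indeed the one used there, and the covering and compactness steps at the end are correct.

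The genuine gap is the claimed comparison $\|u\|_{W^{s,p_0}(U)}\le C\|u\|_W$, which you rightly identify as the technical heart but do not close. At the \emph{same} order $s$ this inequality is simply false: for bounded $\Omega$ and $p_1<p_2$ one does not have $W^{s,p_2}(\Omega)\cap L^\infty(\Omega)\hookrightarrow W^{s,p_1}(\Omega)$ (a bounded $u$ whose dyadic pieces satisfy $\|\Delta_j u\|_{L^p}\approx 2^{-js}\,j^{-1/p_1}$ has $[u]_{s,p_2}<\infty$ but $[u]_{s,p_1}=\infty$), so no control on the $L^{q(\cdot)}$-norm can rescue your estimate. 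Concretely, your split at $|u(x)-u(y)|=|x-y|^s$ handles the set where $|u(x)-u(y)|\geq|x-y|^s$, but on its complement the Gagliardo integrand for $[u]_{s,p_0,U}^{p_0}$ is only bounded by $|x-y|^{-N}$, which is non-integrable near the diagonal. The Young/H\"older route you sketch cannot repair this: with pointwise exponent $\alpha(x,y)=p(x,y)/p_0$ and conjugate $\alpha'(x,y)$, the companion factor $|x-y|^{-N(1-p_0/p(x,y))}$ satisfies $(1-p_0/p(x,y))\,\alpha'(x,y)=1$, so its $L^{\alpha'(\cdot,\cdot)}$-modular is exactly $\int\int|x-y|^{-N}\,dx\,dy=\infty$.

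The missing idea is to \emph{also lower the fractional order}: choose $s'\in(0,s)$ and compare with $W^{s',p_0}(U)$ instead. Then on $\{|u(x)-u(y)|<|x-y|^s\}$ the integrand of $[u]_{s',p_0,U}^{p_0}$ is bounded by $|x-y|^{(s-s')p_0-N}$, locally integrable since $(s-s')p_0>0$; on $\{|u(x)-u(y)|\geq|x-y|^s,\ |x-y|\le1\}$ one dominates by the variable-exponent integrand up to the harmless extra factor $|x-y|^{(s-s')p_0}\leq1$; and $\{|x-y|\geq1\}$ is controlled by $\|u\|_{L^{q(\cdot)}(\Omega)}$. Since $Np_0/(N-s'p_0)$ is continuous in $(s',p_0)$ and the hypothesis $r(x)<p^*_s(x)$ is strict, one may take $s'$ close to $s$, $p_0$ close to $p(x_0,x_0)$ and the ball $U$ small enough that $r\le r_0<Np_0/(N-s'p_0)$ on $U$, so $W^{s',p_0}(U)\hookrightarrow L^{r_0}(U)$ is still compact. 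With that correction the rest of your argument goes through.
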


We also refer to a similar result for traces for fractional Sobolev spaces with variable exponents, see Del Pezzo and Rossi \cite[Theorem 1.1]{Del-Pezzo-Rossi-2017}.

Under the assumption \eqref{cond_pq}, let $\mathcal{L}\colon W\to W^*$ be the nonlinear map defined by
\begin{align}\label{operator_fractional}
    \l\lan \mathcal{L} (u),\ph\r\ran= \int_{\Omega}\frac{|u(x)-u(y)|^{p(x,y)-2}(u(x)-u(y))(\ph(x)-\ph(y))}{|x-y|^{N+sp(x,y)}}\,dy.
\end{align}
It can be seen as the generalization of the fractional $p$-Laplacian in the constant exponent case and it is called fractional $p(\cdot,\cdot)$-Laplacian, denoted by $\mathcal{L}:=(-\Delta)^{s}_{p(\cdot,\cdot)}$.
Bahrouni and R\u{a}dulescu \cite[Lemma 4.2]{Bahrouni-Radulescu-2018} proved several properties of $\mathcal{L}$ which are stated in the next lemma.

\begin{lemma}$\ $
    \begin{enumerate}
    \item[(i)]
        $\mathcal{L}$ is a bounded and strictly monotone operator;
    \item[(ii)]
        $\mathcal{L}$ fulfills the $(\Ss_+$)-property, that is, $u_n \weak u$ in $W$ and $\limsup_{n\to \infty} \langle \mathcal{L}(u_n),u_n-u\rangle \leq 0$ imply $u_n\to u$  in $W$;
    \item[(iii)]
        $\mathcal{L}\colon W\to W^*$ is a homeomorphism.
    \end{enumerate}
\end{lemma}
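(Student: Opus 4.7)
My plan is to treat the three items in order, since each one feeds the next.

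For \textbf{boundedness}, I would estimate $|\langle \mathcal{L}(u),\varphi\rangle|$ by splitting the kernel symmetrically: write
\begin{align*}
\frac{|u(x)-u(y)|^{p(x,y)-1}|\varphi(x)-\varphi(y)|}{|x-y|^{N+sp(x,y)}}
=\frac{|u(x)-u(y)|^{p(x,y)-1}}{|x-y|^{(N+sp(x,y))(p(x,y)-1)/p(x,y)}}
\cdot\frac{|\varphi(x)-\varphi(y)|}{|x-y|^{(N+sp(x,y))/p(x,y)}}
\end{align*}
and apply the H\"older inequality for variable exponents on the product space $\Omega\times\Omega$ with conjugate exponents $p(x,y)/(p(x,y)-1)$ and $p(x,y)$. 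Together with Proposition \ref{pprp1} relating the modular to the Gagliardo seminorm, this yields
\begin{align*}
|\langle \mathcal{L}(u),\varphi\rangle|\le C\bigl([u]_{s,p(\cdot,\cdot),\Omega}^{p^+-1}+[u]_{s,p(\cdot,\cdot),\Omega}^{p^--1}\bigr)[\varphi]_{s,p(\cdot,\cdot),\Omega},
\end{align*}
which gives boundedness on $W$. For \textbf{strict monotonicity}, I would use the classical pointwise inequality $(|a|^{r-2}a-|b|^{r-2}b)(a-b)\ge 0$ for $r>1$, with strict inequality whenever $a\ne b$. Applied pointwise with $a=u(x)-u(y)$, $b=v(x)-v(y)$, $r=p(x,y)$, this makes the integrand of $\langle\mathcal{L}u-\mathcal{L}v,u-v\rangle$ nonnegative and strictly positive on a set of positive measure whenever $u\ne v$.

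For the \textbf{$(S_+)$ property}, assume $u_n\rightharpoonup u$ in $W$ with $\limsup\langle\mathcal{L}u_n,u_n-u\rangle\le 0$. Since $\mathcal{L}$ is bounded and monotone, $\langle\mathcal{L}u,u_n-u\rangle\to 0$, so $\limsup\langle\mathcal{L}u_n-\mathcal{L}u,u_n-u\rangle\le 0$. Monotonicity forces this to be a genuine limit equal to zero. I would then identify the integrand $I_n(x,y)$ of $\langle \mathcal{L}u_n-\mathcal{L}u,u_n-u\rangle$ as a nonnegative sequence in $L^1(\Omega\times\Omega)$ whose integral tends to $0$, hence $I_n\to 0$ in measure. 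The elementary inequality
\begin{align*}
|a-b|^{r}\le C_r\bigl(|a|^{r-2}a-|b|^{r-2}b\bigr)(a-b)\qquad (r\ge 2)
\end{align*}
and its analogue for $1<r<2$ (with an interpolation using the boundedness of $u_n$) then controls the Gagliardo modular of $u_n-u$ by $\int\int I_n$, giving $[u_n-u]_{s,p(\cdot,\cdot),\Omega}\to 0$. Combined with compactness of the embedding $W\hookrightarrow L^{q(\cdot)}(\Omega)$ from Theorem \ref{inj}, this yields $\|u_n-u\|_W\to 0$.

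For the \textbf{homeomorphism} claim, injectivity is immediate from strict monotonicity. Continuity of $\mathcal{L}$ follows from the boundedness estimate together with dominated convergence applied to the integrand. Coercivity follows because $\langle\mathcal{L}u,u\rangle$ dominates the Gagliardo modular, which by Proposition \ref{pprp1} grows like $[u]_{s,p(\cdot,\cdot),\Omega}^{p^-}$ for large seminorm (together with the $L^{q(\cdot)}$-piece of the norm, which can be absorbed). Thus the Browder--Minty theorem for monotone, coercive, demicontinuous operators on the reflexive space $W$ delivers surjectivity. To show $\mathcal{L}^{-1}$ is continuous, take $f_n\to f$ in $W^*$ and set $u_n=\mathcal{L}^{-1}f_n$, $u=\mathcal{L}^{-1}f$. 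Coercivity bounds $(u_n)$ in $W$, so along a subsequence $u_n\rightharpoonup \tilde u$; since $\langle\mathcal{L}u_n,u_n-\tilde u\rangle=\langle f_n,u_n-\tilde u\rangle\to 0$, the $(S_+)$-property gives $u_n\to \tilde u$ in $W$, and continuity of $\mathcal{L}$ together with injectivity identifies $\tilde u=u$; a standard subsequence argument upgrades this to convergence of the full sequence.

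The main technical obstacle is the $(S_+)$ step: because $p(x,y)$ is variable, one cannot rely on a single Clarkson-type inequality, and the integrand must be split according to whether $p(x,y)\ge 2$ or $<2$, with the sub-quadratic regime handled through a H\"older interpolation that uses the a priori bound on $[u_n]_{s,p(\cdot,\cdot),\Omega}$ to convert pointwise control of $I_n$ into modular control of $u_n-u$.
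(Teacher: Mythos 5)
The paper does not prove this lemma at all: it states the result and cites Bahrouni and R\u{a}dulescu \cite[Lemma 4.2]{Bahrouni-Radulescu-2018}, so there is no in-paper argument to compare against. Your outline -- variable-exponent H\"older splitting for boundedness, pointwise monotonicity of $t\mapsto|t|^{r-2}t$, Simon's inequalities (split into $p\geq 2$ and $1<p<2$ regimes) for the $(\Ss_+)$-property, and Browder--Minty plus $(\Ss_+)$ for the homeomorphism claim -- is the standard route and is the one a reader would expect from the cited source.

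However there is a genuine gap in your argument for parts (i) and (iii), and it sits exactly in the phrase ``the $L^{q(\cdot)}$-piece of the norm, which can be absorbed.'' The operator $\mathcal{L}$ acts only through finite differences $u(x)-u(y)$, so every constant function $c$ satisfies $\mathcal{L}c=0$, yet constants belong to $W$ (the domain $\Omega$ is bounded, so $c\in L^{q(\cdot)}(\Omega)$, and the Gagliardo seminorm of $c$ is $0$). Consequently $\langle\mathcal{L}u-\mathcal{L}v,u-v\rangle=0$ whenever $u-v$ is a nonzero constant, which contradicts the strict positivity you invoke for strict monotonicity; and $\langle\mathcal{L}c,c\rangle/\|c\|_W=0$ for every constant $c$, so $\mathcal{L}$ cannot be coercive with respect to $\|\cdot\|_W$. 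Without coercivity Browder--Minty does not yield surjectivity, and without injectivity (which strict monotonicity was supposed to give) the map cannot be a homeomorphism. There is no Poincar\'e-type inequality in this paper that would let you control $\|u\|_{L^{q(\cdot)}(\Omega)}$ by $[u]_{s,p(\cdot,\cdot),\Omega}$, so the ``absorption'' has no support. To make the argument correct you would need one of: a zero-trace or mean-zero normalization on $W$ that excludes constants, a Poincar\'e inequality for the seminorm, or a lower-order term $|u|^{q(x)-2}u$ added to the operator so that it controls the full norm. Your sketch addresses none of these, so as written the proof of (i) and (iii) does not go through.

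Part (ii), the $(\Ss_+)$-property, is not affected by this issue: you get $\langle\mathcal{L}u_n-\mathcal{L}u,u_n-u\rangle\to 0$ from monotonicity, Simon's inequalities then give $[u_n-u]_{s,p(\cdot,\cdot),\Omega}\to 0$, and the compact embedding of Theorem \ref{inj} handles the $L^{q(\cdot)}$ piece directly; the coercivity or strict monotonicity failures never enter. One small slip worth fixing there: $\langle\mathcal{L}u,u_n-u\rangle\to 0$ follows simply from $u_n\rightharpoonup u$ in $W$ and $\mathcal{L}u$ being a fixed element of $W^*$, not from boundedness or monotonicity of $\mathcal{L}$.
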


The operator \eqref{operator_fractional} is related to the energy functional $J\colon W\to R$ defined by
\begin{align*}
    J(u)= \into\into \frac{|u(x)-u(y)|^{p(x,y)}}{p(x,y)|x-y|^{N+sp(x,y)}} \,dx\,dy\quad\text{for all }u \in W.
\end{align*}
It is clear that $J$ is well-defined on $W$ and $J\in C^1(W;\R)$ with the derivative given by
\begin{align*}
    \l\lan J'(u),\ph\r\ran = \l\lan \mathcal{L}(u),\ph\r\ran\quad\text{for all }u,\ph\in W,
\end{align*}
see Bahrouni and R\u{a}dulescu \cite[Lemma 4.1]{Bahrouni-Radulescu-2018}.

\begin{remark}\label{remark_section_2}
    Note that Theorem \ref{inj} remains true when $q(x)\geq p(x,x)$ for all $x\in\overline{\Omega}$, see Zhang and Zhang \cite{Zhang-Zhang-2020}. In existing articles working on $W$, see Bahrouni and R\u{a}dulescu \cite[Theorem 5.1]{Bahrouni-Radulescu-2018} or Kaufmann, Rossi and Vidal \cite[Theorem 1.4]{Kaufmann-Rossi-Vidal-2017}, the function $q$ is actually assumed to satisfy $q(x)>p(x,x)$ for all $x\in \overline\Omega$ due to some technical reason. Such spaces are actually not a
generalization of the fractional Sobolev space $W^{s,p}(\Omega)$.
\end{remark}

\section{Functional setting}\label{section_3}

The aim of this section is to give the basic properties of the
fractional $p(\cdot,\cdot)$-Laplacian with associated $p(\cdot,\cdot)$-Neumann boundary condition. After this, we are able to introduce the definition of a weak solution for the new Robin problem with $p(\cdot,\cdot)$-Neumann boundary condition stated in \eqref{eq}. In order to do this, we use some ideas developed by
Bahrouni, R\u{a}dulescu and Winkert \cite{Bahrouni-Radulescu-Winkert-2019} and Dipierro, Ros-Oton and Valdinoci \cite{Dipierro-Ros-Oton-Valdinoci-2017}.

We suppose the following assumptions:
\begin{enumerate}
    \item[(S)]
    $s\in \R$ with $s \in (0,1)$;
    \item[(P)]
    $p\colon \R^{2N} \to (1,+\infty)$ is a symmetric, continuous function bounded away from $1$, that is,
    \begin{align*}
        p(x,y)=p(y,x)\quad\text{for all }x,y\in \R^{2N}
    \end{align*}
    with
    \begin{align*}
        1<p^-&:= \min_{(x,y) \in \R^{2N}}p(x,y)\leq p(x,y)
        \leq p^+:=\max_{(x,y) \in \R^{2N}} p(x,y).
    \end{align*}
    and $sp^+<N$;
    \item[(G)]
    $g\in L^{1}(\R^{N}\setminus \Omega)$;
    \item[($\beta$)]
    $\beta \in L^{\infty}(\R^{N}\setminus \Omega)$ and $\beta \geq 0$ in $\R^{N}\setminus \Omega$;
\end{enumerate}

Let $u \colon \R^N\to\R$ be a measurable function and let $\overline{p}(x)=p(x,x)$ for all $x \in \R^{2N}$. We set
\begin{align*}
    \|u\|_{X}:=
    [u]_{s,\overline{p}(\cdot),\R^{2N}\setminus (\mathcal{C}\Omega)^2}
    +\|u\|_{L^{\overline{p}(\cdot)}(\Omega)}+\l\|\l|g\r|^{\frac{1}{\overline{p}(\cdot)}}u\r\|_{L^{\overline{p}(\cdot)}(\mathcal{C}\Omega)}
    +\l\|\beta^{\frac{1}{\overline{p}(\cdot)}}u\r\|_{L^{\overline{p}(\cdot)}(\mathcal{C}\Omega)},
\end{align*}
where $\mathcal{C}\Omega=\R^N\setminus\Omega$ and
\begin{align*}
    X:=\l\{u\colon\R^N\to \R \text{ measurable } : \  \|u\|_{X}<\infty\r\}.
\end{align*}

\begin{proposition}
    Let hypotheses (S), (P), (G) and ($\beta$) be satisfied. Then, $(X,\|\cdot\|_{X})$ is a reflexive Banach space.
\end{proposition}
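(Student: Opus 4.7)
The plan is to realize $X$ isometrically as a closed subspace of a product of reflexive variable exponent Lebesgue spaces, so that both completeness and reflexivity of $X$ follow in one move.

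First I would verify that $\|\cdot\|_{X}$ actually defines a norm. Subadditivity and positive homogeneity of each of the four summands are immediate, so the only delicate point is definiteness: if $\|u\|_{X}=0$ then $u=0$ a.e.\ on $\Omega$ and $u(x)=u(y)$ for a.e.\ $(x,y)\in\R^{2N}\setminus(\mathcal{C}\Omega)^{2}$, from which the cross region $\Omega\times\mathcal{C}\Omega$ combined with Fubini forces $u=0$ a.e.\ on $\R^{N}$.

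Next I would flatten $X$ into a product. Setting
\begin{align*}
U_{u}(x,y)=\frac{u(x)-u(y)}{|x-y|^{\frac{N}{p(x,y)}+s}}
\end{align*}
on $\R^{2N}\setminus(\mathcal{C}\Omega)^{2}$, a direct Luxemburg-modular computation shows that the seminorm piece of $\|u\|_{X}$ is exactly $\|U_{u}\|_{L^{p(\cdot,\cdot)}(\R^{2N}\setminus(\mathcal{C}\Omega)^{2})}$ (with Lebesgue measure). The map
\begin{align*}
\Phi(u)=\bigl(U_{u},\,u|_{\Omega},\,|g|^{\frac{1}{\overline{p}(\cdot)}}u|_{\mathcal{C}\Omega},\,\beta^{\frac{1}{\overline{p}(\cdot)}}u|_{\mathcal{C}\Omega}\bigr)
\end{align*}
is therefore a linear isometry from $X$ into
\begin{align*}
Y=L^{p(\cdot,\cdot)}\bigl(\R^{2N}\setminus(\mathcal{C}\Omega)^{2}\bigr)\times L^{\overline{p}(\cdot)}(\Omega)\times L^{\overline{p}(\cdot)}(\mathcal{C}\Omega)\times L^{\overline{p}(\cdot)}(\mathcal{C}\Omega),
\end{align*}
equipped with the sum norm. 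Since $1<p^{-}\le p^{+}<\infty$, each factor is a reflexive Banach space by the standard theory recalled in Section \ref{section_2}, so $Y$ is reflexive. It will then suffice to show that $\Phi(X)$ is closed in $Y$: closed subspaces of reflexive Banach spaces are themselves reflexive, and the isometry transports both completeness and reflexivity back to $X$.

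For the closedness step I would take $(u_{n})\subset X$ with $\Phi(u_{n})\to(V,v,w_{1},w_{2})$ in $Y$ and manufacture $u\in X$ with $\Phi(u)=(V,v,w_{1},w_{2})$. Passing to a subsequence gives $u_{n}\to v$ a.e.\ on $\Omega$ and $U_{u_{n}}\to V$ a.e.\ on $\R^{2N}\setminus(\mathcal{C}\Omega)^{2}$; Fubini then supplies a single $x_{0}\in\Omega$ for which $u_{n}(x_{0})\to v(x_{0})$ and $U_{u_{n}}(x_{0},y)\to V(x_{0},y)$ for a.e.\ $y\in\mathcal{C}\Omega$, and solving the definition of $U_{u_{n}}$ for $u_{n}(y)$ yields a pointwise a.e.\ limit on $\mathcal{C}\Omega$ which I take as the definition of $u$ there (and $u=v$ on $\Omega$). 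Uniqueness of a.e.\ limits then identifies $V=U_{u}$, $w_{1}=|g|^{1/\overline{p}(\cdot)}u|_{\mathcal{C}\Omega}$ and $w_{2}=\beta^{1/\overline{p}(\cdot)}u|_{\mathcal{C}\Omega}$, so $\Phi(u)=(V,v,w_{1},w_{2})$ and $u\in X$. The main obstacle is precisely this reconstruction: since $|g|$ and $\beta$ may both vanish on large pieces of $\mathcal{C}\Omega$, the two weighted Lebesgue factors alone cannot recover $u$ outside $\Omega$, and it is the seminorm factor, using $|\Omega|>0$ to furnish an abundance of anchor points $x_{0}$, that does the heavy lifting.
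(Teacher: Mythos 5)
Your proposal is correct and follows essentially the same approach as the paper: the isometric embedding into the reflexive product space $Y$ and the anchor-point reconstruction of $u$ on $\mathcal{C}\Omega$ are precisely the paper's tools. The only deviation is organizational --- the paper proves completeness of $X$ first, via a Cauchy-sequence argument ending in an application of Fatou's lemma, and then invokes the isometry to transfer reflexivity, whereas you obtain completeness and reflexivity simultaneously by showing that $\Phi(X)$ is closed in $Y$; your linear choice $U_u(x,y)=\bigl(u(x)-u(y)\bigr)/|x-y|^{N/p(x,y)+s}$ also silently corrects a minor slip in the paper, whose map $T$ carries an absolute value $|u(x)-u(y)|$ in its fourth component and is therefore not linear as written.
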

\begin{proof}
    {\bf Step 1:} $(X,\|\cdot\|_{X})$ is a Banach space.

    It is easy to check that $\|\cdot\|_{X}$ is a norm on $X$. We only show that if $\|u\|_{X}=0$, then $u=0$ a.\,e.\,in $\R^N$. Indeed, from $\|u\|_{X}=0$, we get $\|u\|_{L^{\overline{p}(\cdot)}(\Omega)}=0$, which implies that
    \begin{align}\label{1}
    u=0 \quad \text{a.\,e.\,}\text{in } \Omega,
    \end{align}
    and
    \begin{align}\label{2}
    \int_{\R^{2N}\setminus (\mathcal{C}\Omega)^2}\frac{|u(x)-u(y)|^{p(x,y)}}{|x-y|^{N+sp(x,y)}}\,dx\,dy=0.
    \end{align}
    By \eqref{2}, we deduce that $u(x)=u(y)$ for a.\,a.\,$(x,y)\in\R^{2N}\setminus (\mathcal{C}\Omega)^2$, that is, $u=c\in\R$ a.\,e.\, in $\R^N$. By \eqref{1}, it easily follows that $c=0$, so $u=0$ a.\,e.\,in
    $\R^N$.

    Now, we prove that $X$ is complete. To this end, let $(u_k)_{k\in\mathbb{N}}$ be a Cauchy sequence in $X$. In particular, $(u_k)_{k\in\mathbb{N}}$ is a Cauchy sequence in $L^{\overline{p}(\cdot)}(\Omega)$ and so, up to a subsequence, there exists $u\in L^{\overline{p}(\cdot)}(\Omega)$ such that
    \begin{align*}
    u_k\to u \quad\text{in } \Lp{\overline{p}(\cdot)} \text{ and a.\,e.\,in } \Omega.
    \end{align*}
    Precisely, we find $Z_1\subset\R^N$ such that
    \begin{align}\label{3}
    |Z_1|=0\quad \text{and}\quad u_k(x)\to u(x)\quad \text{for every } x\in \Omega\setminus Z_1.
    \end{align}
    For any $U\colon\R^N\to\R$ and for any $(x,y)\in\R^{2N}$ we set
    \begin{align*}
    E_U(x,y):=\frac{(U(x)-U(y))\chi_{\R^{2N}\setminus (\mathcal{C}\Omega)^2}(x,y)}{|x-y|^{\frac{N}{p(x,y)}+s}},
    \end{align*}
    which implies
    \begin{align*}
    E_{u_k}(x,y)-E_{u_h}(x,y)=\frac{(u_k(x)-u_h(x)-u_k(y)+u_h(y))\chi_{\R^{2N}\setminus (\mathcal{C}\Omega)^2}(x,y)}{|x-y|^{\frac{N}{p(x,y)}+s}}.
    \end{align*}
    Using the fact that $(u_k)_{k\in\mathbb{N}}$ is a Cauchy sequence in $X$ and Proposition \ref{pprp1}, for every $\eps>0$ there exists $N_{\eps}\in \mathbb{N}$ such that for $h,k\geq N_{\eps}$, we have
    \begin{align*}
    \eps^{\frac{p^-}{p^+}}
    & \geq [E_{u_k}-E_{u_h}]^{\frac{p^-}{p^+}}_{s,p(\cdot,\cdot),\R^{2N}\setminus (\mathcal{C}\Omega)^2}\\
    &\geq \l (\int_{\R^{2N}\setminus (\mathcal{C}\Omega)^2} \frac{|(u_k-u_h)(x)-(u_k-u_k)(y)|^{p(x,y)}}{|x-y|^{N+sp(x,y)}}\,dx\,dy\r)^{p^-}\\
    &\geq \l\|E_{u_k}-E_{u_h}\r\|_{L^{p(\cdot,\cdot)}\l(\R^{2N}\r)}.
    \end{align*}
    Thus, $(E_{u_k})_{k\in\mathbb{N}}$ is a Cauchy sequence in $L^{p(\cdot,\cdot)}(\R^{2N})$ and so, up to a subsequence,
    we are able to assume that $E_{u_k}$ converges to some $E_u$ in  $L^{p(\cdot,\cdot)}(\R^{2N})$ and a.\,e.\,in $\R^{2N}$. This means we can find $Z_2\subset\R^{2N}$ such that
    \begin{align}\label{4}
    |Z_2|=0\ \ \text{and}\ \ E_{u_k}(x,y)\to E_u(x,y)\ \text{for every}\ (x,y)\in\R^{2N}\setminus Z_2.
    \end{align}
    For any $x\in \Omega$, we set
    \begin{align*}
    S_x&:=\l\{y\in\R^N:\ (x,y)\in\R^{2N}\setminus Z_2\r\},\\
    W&:=\l\{(x,y)\in\R^{2N}:\ x\in\Omega \text{ and }y\in\R^N\setminus S_x\r\},\\
    V& :=\l\{x\in\Omega:\ |\R^N\setminus S_x|=0\r\}.
    \end{align*}
    Proceeding exactly as in Dipierro, Ros-Oton and Valdinoci \cite[Proposition 3.1]{Dipierro-Ros-Oton-Valdinoci-2017} and
    Mugnai  and  Lippi \cite[Proposition 2.2]{Mugnai-Proietti-Lippi-2019} we get
    \begin{align*}
    |\Omega\setminus(V\setminus Z_1)|=|(\Omega\setminus V)\cup Z_1)|\leq |\Omega\setminus V|+|Z_1|=0.
    \end{align*}
    In particular $V\setminus Z_1\neq\emptyset$, so we can fix $x_0\in V\setminus Z_1$.

    Because of $x_0 \in \Omega \setminus Z_1$, from \eqref{3}, it follows
    \begin{align*}
    \lim_{k\to\infty}u_k(x_0)=u(x_0).
    \end{align*}
    In addition, since $x_0\in V$, we obtain $|\R^N\setminus S_{x_0}|=0$. Then, for a.\,a.\,$y\in \R^N$, this yields $(x_0,y) \in\R^{2N}\setminus Z_2$ and hence, by \eqref{4},
    \begin{align*}
    \lim_{k\to\infty}E_{u_k}(x_0,y)=E_u(x_0,y).
    \end{align*}
    Since $\Omega\times (\mathcal{C}\Omega)\subseteq \R^{2N}\setminus(\mathcal{C}\Omega)^2$, we have
    \begin{align*}
    E_{u_k}(x_0,y):=\frac{u_k(x_0)-u_k(y)}{|x_0-y|^{s+\frac{N}{p(x,y)}}} \quad  \text{for a.\,a.\, } y\in \mathcal{C}\Omega.
    \end{align*}
    But this implies
    \begin{align*}
    \lim_{k\to\infty}u_k(y)
    & =\lim_{k\to\infty} \l(u_k(x_0)-|x_0-y|^{\frac{N}{p(x,y)}+s}E_{u_k}(x_0,y)\r)\\
    &= u(x_0)-|x_0-y|^{\frac{N}{p(x,y)}+s}E_{u}(x_0,y)\quad \text{for a.\,a.\, } y\in \mathcal{C}\Omega.
    \end{align*}
    Combining this with \eqref{3} we see that $u_k$ converges a.\,e.\,in $\R^N$ to some $u$ in $\R^N$. Since $u_k$ is a Cauchy sequence in $X$, for any $\eps>0$ there exists $N_{\eps}>0$ such that, for any $h\geq N_{\eps}$, we have by applying Fatou's Lemma,
    \begin{align*}
    \eps
    & \geq \liminf_{k\to+\infty}\|u_h-u_k\|_{X}\\
    & \geq \liminf_{k\to+\infty}\l(\rho_{s,p(\cdot,\cdot),\R^{2N}\setminus(\mathcal{C}\Omega)^2}(u_h-u_k)\r)^{\frac{1}{p^-}}
    +\liminf_{k\to+\infty}\l(\int_{\Omega}|u_h-u_k|^{\overline{p}(x)}\,dx\r)^{\frac{1}{p^-}}\\
    &\quad +\liminf_{k\to+\infty}\l(\int_{\mathcal{C}\Omega}|g||u_h-u_k|^{\overline{p}(x)}\,dx\r)^{\frac{1}{p^-}}
    +\liminf_{k\to+\infty}\l(\int_{\mathcal{C}\Omega}|\beta||u_h-u_k|^{\overline{p}(x)}\,dx\r)^{\frac{1}{p^-}}\\
    & \geq \l(\rho_{s,p(\cdot,\cdot),\R^{2N}\setminus(\mathcal{C}\Omega)^2}(u_h-u)\r)^{\frac{1}{p^-}}
    +\l(\int_{\Omega}|u_h-u|^{\overline{p}(x)}\,dx\r)^{\frac{1}{p^-}}\\
    &\quad +\l(\int_{\mathcal{C}\Omega}|g||u_h-u|^{\overline{p}(x)}\,dx\r)^{\frac{1}{p^-}}
    +\l(\int_{\mathcal{C}\Omega}|\beta||u_h-u|^{\overline{p}(x)}\,dx\r)^{\frac{1}{p^-}}\\
    &\geq [u_h-u]_{s,p,\R^{2N}\setminus (\mathcal{C}\Omega)^2}^{\frac{p^+}{p^-}}
    +\|u_h-u\|_{L^{\overline{p}(\cdot)}(\Omega)}^{\frac{p^+}{p^-}}
    + \||g|^{\frac{1}{\overline{p}(\cdot)}}(u_h-u)\|_{L^{\overline{p}(\cdot)}(\mathcal{C}\Omega)}^{\frac{p^+}{p^-}}\\
    & \quad +\l(\int_{\mathcal{C}\Omega}|\beta||u_h-u|^{\overline{p}(x)}\,dx\r)^{\frac{p^{+}}{p^-}}\\
    &\geq \frac{1}{4^{\frac{p^{+}}{p^-}-1}}\|u_h-u\|_{X}^{\frac{p^{+}}{p^{-}}}.
    \end{align*}
    Therefore, $u_h$ converges to $u$ in $X$ and so, $X$ is complete.

    {\bf Step 2:} $X$ is a reflexive space.

    Consider the space
    \begin{align*}
    Y=L^{\overline{p}(x)}(\Omega)
    \times L^{\overline{p}(x)}(\mathcal{C}\Omega)
    \times L^{\overline{p}(x)}(\mathcal{C}\Omega)
    \times L^{p(x,y)}\l(\R^{2N}\setminus(\mathcal{C}\Omega)^2\r).
    \end{align*}
    endowed with the norm
    \begin{align*}
    \|v\|_{Y}:=
    \|v\|_{L^{\overline{p}(\cdot)}(\Omega)}
    +\|\beta^{\frac{1}{\overline{p}(\cdot)}} v\|_{L^{\overline{p}(\cdot)}(\mathcal{C}\Omega)}
    +\||g|^{\frac{1}{\overline{p}(\cdot)}} v\|_{L^{\overline{p}(\cdot)}(\mathcal{C}\Omega)}
    +[v]_{s,p(\cdot,\cdot),\R^{2N}\setminus (\mathcal{C}\Omega)^2}.
    \end{align*}
    We notice that $(Y,\|.\|_{Y})$ is a reflexive Banach space. We consider the map $T\colon X\to Y$ defined as
    \begin{align*}
    T(u):=\l(u,\beta^{\frac{1}{\overline{p}(\cdot)}} u,g^{\frac{1}{\overline{p}(x)}} u,\frac{|u(x)-u(y)|}{|x-y|^{\frac{N}{p(x,y)}+s}}\r).
    \end{align*}
    By construction, we have that
    \begin{align*}
    \|T(u)\|_{Y}=\|u\|_{X}.
    \end{align*}
    Hence, $T$ is an isometry from $X$ to the reflexive space $Y$. This shows that $X$ is reflexive.
\end{proof}

\begin{proposition}\label{compact}
    Let hypotheses (S), (P), (G) and ($\beta$) be satisfied. Then, for any $r\in C_{+}(\overline{\Omega}) $ with $1<r(x)< p_{s}^{*}(x)$ for all $x\in \overline{\Omega}$, there exists a constant $\alpha>0$ such that
    \begin{align*}
    \|u\|_{L^{r(\cdot)}(\Omega)}\leq \alpha\|u\|_{X}\quad \text{for all }u\in X.
    \end{align*}
    Moreover, this embedding is compact.
\end{proposition}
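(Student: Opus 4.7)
The strategy is to reduce the assertion to the Kaufmann--Rossi--Vidal embedding recorded in Theorem \ref{inj}, applied to the restriction of functions in $X$ to the domain $\Omega$. The key geometric observation is the inclusion $\Omega\times\Omega\subset \R^{2N}\setminus(\mathcal{C}\Omega)^2$, so that discarding the integration outside $\Omega\times\Omega$ only decreases the Gagliardo-type seminorm.

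First I would fix $u\in X$ and consider the restriction $\tilde u := u|_{\Omega}$. Directly from the definition of $\|\cdot\|_X$ and the above inclusion one gets
\[
[\tilde u]_{s,p(\cdot,\cdot),\Omega} \le [u]_{s,p(\cdot,\cdot),\R^{2N}\setminus(\mathcal{C}\Omega)^2}, \qquad \|\tilde u\|_{L^{\overline p(\cdot)}(\Omega)} \le \|u\|_X,
\]
so $\tilde u$ belongs to $W := W^{s,\overline p(\cdot),p(\cdot,\cdot)}(\Omega)$ with $\|\tilde u\|_{W}\le \|u\|_X$.

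Next I would invoke Theorem \ref{inj} with the choice $q(\cdot) = \overline p(\cdot) = p(\cdot,\cdot)$. Hypothesis (P) gives $sp^+<N$ (hence $sp(x,y)<N$ pointwise), the critical exponent $p_s^*(x)=Np(x,x)/(N-sp(x,x))$ is well-defined, and by assumption $1<r_-\le r(x)<p_s^*(x)$ with $r\in C_+(\overline\Omega)$. Since Theorem \ref{inj} as stated requires the strict inequality $q(x)>p(x,x)$, I would invoke Remark \ref{remark_section_2} to legitimize the borderline equality $q(x)=p(x,x)$. The theorem then yields a constant $C=C(N,s,p,r,\Omega)$ with
\[
\|u\|_{L^{r(\cdot)}(\Omega)} = \|\tilde u\|_{L^{r(\cdot)}(\Omega)} \le C\|\tilde u\|_{W} \le C\|u\|_X,
\]
giving the continuous embedding with $\alpha = C$.

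For the compactness assertion I would argue by sequential compactness: if $(u_n)\subset X$ is bounded, the restrictions $(\tilde u_n)$ are bounded in $W$, and the compact part of Theorem \ref{inj} (via Remark \ref{remark_section_2}) extracts a subsequence converging in $L^{r(\cdot)}(\Omega)$; since $u_n$ and $\tilde u_n$ coincide on $\Omega$, this is the desired subsequential convergence in $L^{r(\cdot)}(\Omega)$. I expect the only non-routine step to be the invocation of Remark \ref{remark_section_2} to cover the borderline case $q=\overline p$; everything else is a soft restriction argument building on the already-established embedding for $W$.
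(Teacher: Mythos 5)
Your proof is correct and follows the same route as the paper's: restrict $u$ to $\Omega$, use the inclusion $\Omega\times\Omega\subset\R^{2N}\setminus(\mathcal{C}\Omega)^2$ to bound the $W^{s,\overline{p}(\cdot),p(\cdot,\cdot)}(\Omega)$-norm of the restriction by $\|u\|_X$, then apply Theorem \ref{inj} together with Remark \ref{remark_section_2} to cover the borderline choice $q(\cdot)=\overline{p}(\cdot)$. The paper's proof is only a two-line sketch (asserting $\|u\|_E\le\|u\|_X$, with $E$ evidently the space $W$), so you have simply supplied the details it leaves implicit.
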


\begin{proof}
    By the assumptions it is clear that
    \begin{align*}
    \|u\|_{E}\leq \|u\|_{X} \quad \text{for all }u\in X.
    \end{align*}
    Therefore, using Theorem \ref{inj} and Remark \ref{remark_section_2}, we get our desired result.
\end{proof}

Note that the norm $\|\cdot\|_{X}$ is equivalent on $X$ to
\begin{align}\label{equivalent_norm}
    \begin{split}
    & \|u\|\\
    &=\inf\left\{\mu\geq 0 \ \bigg| \ \rho\left(\frac{u}{\mu}\right)\leq 1\right\}\\
    &=\inf\left\{\mu\geq 0 \ \bigg | \  \int_{\R^{2N}\setminus (\mathcal{C}\Omega)^2}\frac{|u(x)-u(u)|^{p(x,y)}}{\mu^{p(x,y)}p(x,y)(|x-y|)^{N+sp(x,y)}}\,dx\,dy
    + \int_{\Omega}\frac{|u|^{\overline{p}(x)}}{\overline{p}(x)\mu^{\overline{p}(x)}}\,dx\right.\\
    &\qquad \qquad \qquad \quad \left.+ \int_{\mathcal{C}\Omega}\frac{g(x)}{\mu^{\overline{p}(x)}\overline{p}(x)} |u|^{\overline{p}(x)}\,dx
    + \int_{\mathcal{C}\Omega}\frac{\beta(x)}{\mu^{\overline{p}(x)}\overline{p}(x)}
    |u|^{\overline{p}(x)}\,dx\leq 1\right\},
    \end{split}
\end{align}
where the modular $\rho\colon X\to \R$ is defined by
\begin{align*}
    \rho\left(u\right)
    &=  \int_{\R^{2N}\setminus (\mathcal{C}\Omega)^2}\frac{|u(x)-u(u)|^{p(x,y)}}{p(x,y)(|x-y|)^{N+sp(x,y)}}\,dx\,dy
    + \int_{\Omega}\frac{|u|^{\overline{p}(x)}}{\overline{p}(x)}\,dx\\
    &\quad + \int_{\mathcal{C}\Omega}\frac{g(x)}{\overline{p}(x)} |u|^{\overline{p}(x)}\,dx
    + \int_{\mathcal{C}\Omega}\frac{\beta(x)}{\overline{p}(x)}
    |u|^{\overline{p}(x)}\,dx.
\end{align*}

The following lemma will be helpful in later considerations.

\begin{lemma}\label{modular}
    Let hypotheses (S), (P), (G) and ($\beta$) be satisfied and let $u \in X$. Then the following hold:
    \begin{enumerate}
    \item[(i)]
        For $u\neq 0$ we have: $\|u\|_{X}=a$ if and only if $\rho(\frac{u}{a})=1$;
    \item[(ii)]
        $\|u\|_{X}<1$ implies $\frac{\|u\|_{X}^{p{+}}}{4^{p^{+}-1}}\leq \rho(u)\leq 4\|u\|_{X}^{p^{-}}$;
    \item[(iii)]
        $\|u\|_{X}>1$ implies $\|u\|_{X}^{p^{-}}\leq \rho(u)$.
    \end{enumerate}
\end{lemma}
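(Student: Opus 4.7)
I would read $\|u\|_X$ in the statement as the Luxemburg-type norm defined in \eqref{equivalent_norm} via the joint modular $\rho$, and obtain all three assertions by analyzing the auxiliary function $\Phi(\mu):=\rho(u/\mu)$ and its scaling in $\mu>0$. The main point is that $\Phi$ is a continuous, strictly decreasing bijection from $(0,\infty)$ onto $(0,\infty)$ (in the extended-real sense), so that $\Phi(a)=1$ admits a unique solution $a$ which must coincide with $\|u\|_X$; then each of the four integrands making up $\rho$ scales homogeneously under $u\mapsto u/\mu$ with degree $p(x,y)$ (or $\overline{p}(x)$), which squeezes $\rho(u)$ between the powers $a^{p^-}$ and $a^{p^+}$.

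For part~(i), I would show that $\Phi$ is strictly decreasing on $(0,\infty)$, continuous wherever it is finite, with $\Phi(\mu)\to 0$ as $\mu\to\infty$ and $\Phi(\mu)\to+\infty$ as $\mu\to 0^+$. Strict monotonicity follows from the pointwise strict monotonicity of each of the four nonnegative integrands in $\mu$, combined with the fact that at least one of them is positive on a set of positive measure (guaranteed by $u\neq 0$, since otherwise the triviality step from Step~1 of the preceding proposition would force $u$ to be a.e.\ constant on $\R^N$ and zero on $\Omega$, hence identically zero). Continuity on compact subintervals of $\{\Phi<\infty\}$ and the limit at $+\infty$ follow by dominated convergence with dominating integrand taken at a fixed finite value of $\mu$, while the blow-up $\Phi(\mu)\to+\infty$ as $\mu\to 0^+$ follows from Fatou applied to any of the four nonvanishing pieces. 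Combining these properties yields $\{\mu>0:\Phi(\mu)\le 1\}=[a,\infty)$ with $\Phi(a)=1$, and the infimum definition of $\|u\|_X$ in \eqref{equivalent_norm} forces $\|u\|_X=a$.

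For parts~(ii) and~(iii), I would set $a=\|u\|_X$ so that $\rho(u/a)=1$ by~(i). Each summand of $\rho(u/a)$ is obtained from the corresponding summand of $\rho(u)$ by multiplying its integrand pointwise by $a^{-p(x,y)}$ (or $a^{-\overline{p}(x)}$). If $a<1$ then $a^{-p^-}\le a^{-p(x,y)}\le a^{-p^+}$; integrating and summing the four pieces give $a^{-p^-}\rho(u)\le 1\le a^{-p^+}\rho(u)$, i.e.\ the sharp bound $a^{p^+}\le\rho(u)\le a^{p^-}$, from which the weaker inequality $a^{p^+}/4^{p^+-1}\le\rho(u)\le 4a^{p^-}$ in~(ii) follows at once since $4^{p^+-1}\ge 1$ and $4\ge 1$. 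If $a>1$ the pointwise comparison flips to $a^{-p^+}\le a^{-p(x,y)}\le a^{-p^-}$, producing $a^{p^-}\le\rho(u)\le a^{p^+}$, and (iii) is then just the left-hand inequality. The only conceptually delicate step is the $\mu\to 0^+$ blow-up in~(i); once (i) is in place, (ii) and (iii) reduce to the elementary power estimate for $a^{-p(x,y)}$ and integration.
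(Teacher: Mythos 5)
Your proposal is correct, and for part (i) it follows essentially the same route as the paper: both arguments rest on the monotonicity of $\mu\mapsto\rho(u/\mu)$ and the Luxemburg-type definition \eqref{equivalent_norm}, with your version spelling out the continuity, strict decrease and limits at $0^+$ and $+\infty$ that the paper leaves implicit. You are also right to flag that the lemma is only coherent if $\|u\|_X$ is read as the Luxemburg norm $\|u\|$ from \eqref{equivalent_norm} rather than the sum-of-four-pieces norm $\|\cdot\|_X$ defined earlier; the paper itself is inconsistent here, invoking the Luxemburg definition in (i) and (iii) but switching to the sum norm in (ii). Where you genuinely diverge from the paper is in parts (ii) and (iii). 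You derive both directly from (i) via the pointwise comparison $a^{-p^-}\le a^{-p(x,y)}\le a^{-p^+}$ (for $a<1$, reversed for $a>1$) applied to the integrands of $\rho(u/a)$, which yields the cleaner and in fact sharper inequalities $a^{p^+}\le\rho(u)\le a^{p^-}$ for $a<1$ and $a^{p^-}\le\rho(u)\le a^{p^+}$ for $a>1$; the stated bounds with the factor $4^{p^+-1}$ then follow trivially. The paper instead proves (ii) by observing that $\|u\|_X<1$ forces each of the four sub-norms to be $<1$, applying Proposition \ref{pprp1} to each of the four sub-modulars, and recombining by convexity, which is what introduces the lossy constant $4^{p^+-1}$; its proof of (iii) packages your pointwise comparison as a mean value theorem for each of the four integrals, arriving at the same conclusion. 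Your approach is preferable: it treats (ii) and (iii) uniformly, avoids splitting the modular, and improves the constants in (ii) to the optimal ones.
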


\begin{proof}
    (i) It is clear that the mapping $\lambda \mapsto \rho(\lambda u)$ is a continuous, convex, even function, which is strictly increasing on $ [0,+\infty)$. Thus, by the definition of $\rho$ and the equivalent norm given in \eqref{equivalent_norm}, we have
    \begin{align*}
    \|u\|_{X}=a \quad \Longleftrightarrow \quad \rho\left(\frac{u}{a}\right)=1.
    \end{align*}

    (ii) Let $u\in X$ be such that $\|u\|_X<1$, then
    \begin{align*}
    & [u]_{s,p(\cdot,\cdot),\R^{2N}\setminus (\mathcal{C}\Omega)^2}<1, && \|u\|_{L^{\overline{p}(\cdot)}(\Omega)}<1,\\[1ex]
    &\l\|\l|g\r|^{\frac{1}{\overline{p}(\cdot)}}u\r\|_{L^{\overline{p}(\cdot)}(\mathcal{C}\Omega)}<1,&& \l\|\beta^{\frac{1}{\overline{p}(\cdot)}} u\r\|_{L^{\overline{p}(\cdot)}(\mathcal{C}\Omega)}<1.
    \end{align*}
    By the convexity of $\rho$ along with Proposition \ref{pprp1} we obtain the assertion.

    (iii) Let $u\in X$ be such that $\|u\|_{X}>1$. From (i) it follows
    \begin{align*}
    \rho\left(\frac{u}{\|u\|_X}\right)
    &= \int_{\R^{2N}\setminus (\mathcal{C}\Omega)^2}\frac{|u(x)-u(u)|^{p(x,y)}}{p(x,y)\|u\|_{X}^{p(x,y)}(|x-y|)^{N+sp(x,y)}}\,dx\,dy
    + \int_{\Omega}\frac{|u|^{\overline{p}(x)}}{\overline{p}(x)\|u\|_{X}^{\overline{p}(x)}}\,dx\\
    &\quad + \int_{\mathcal{C}\Omega}\frac{g(x)}{\overline{p}(x)\|u\|_{X}^{\overline{p}(x)}} |u|^{\overline{p}(x)}\,dx
    + \int_{\mathcal{C}\Omega}\frac{\beta(x)}{\overline{p}(x)\|u\|_{X}^{\overline{p}(x)}}|u|^{\overline{p}(x)}\,dx =1.
    \end{align*}
    Then, by the mean value theorem, there exist $(x_{1},y_{1})\in \R^{2N}\setminus (\mathcal{C}\Omega)^2$, $x_{2}\in \overline{\Omega}$, $x_{3},x_{4}\in \mathcal{C}\Omega $ such that
    \begin{align*}
    1
    &=\frac{1}{\|u\|_{X}^{p(x_{1},y_{1})}}\int_{\R^{2N}\setminus (\mathcal{C}\Omega)^2}\frac{|u(x)-u(u)|^{p(x,y)}}{p(x,y)(|x-y|)^{N+sp(x,y)}}\,dx\,dy
    +\frac{1}{\|u\|_{X}^{\overline{p}(x_2)}}
    \int_{\Omega}\frac{|u|^{\overline{p}(x)}}{\overline{p}(x)}\,dx\\
    & \quad +\frac{1}{\|u\|_{X}^{\overline{p}(x_3)}} \int_{\mathcal{C}\Omega}\frac{g(x)}{\overline{p}(x)} |u|^{\overline{p}(x)}\,
    dx+\frac{1}{\|u\|_{X}^{\overline{p}(x_4)}} \int_{\mathcal{C}\Omega}\frac{\beta(x)}{\overline{p}(x)}
    |u|^{\overline{p}(x)}\,dx.
    \end{align*}
    Since $\|u\|_{X}>1$, it follows that
    \begin{align*}
    1
    &\leq\frac{1}{\|u\|_{X}^{p^{-}}} \left[\int_{\R^{2N}\setminus (\mathcal{C}\Omega)^2}\frac{|u(x)-u(u)|^{p(x,y)}}{p(x,y)(|x-y|)^{N+sp(x,y)}}\,dx\,dy
    +\int_{\Omega}\frac{|u|^{\overline{p}(x)}}{\overline{p}(x)}\,dx\right]\\
    &\quad +\frac{1}{\|u\|_{X}^{p^{-}}} \left[  \int_{\mathcal{C}\Omega}\frac{\beta(x)}{\overline{p}(x)}
    |u|^{\overline{p}(x)}\,dx+ \int_{\mathcal{C}\Omega}\frac{g(x)}{\overline{p}(x)} |u|^{\overline{p}(x)}\,dx\right].
    \end{align*}
    This finishes the proof.
\end{proof}

\begin{lemma}\label{ss}
    Let hypotheses (S), (P), (G) and ($\beta$) be satisfied. Then $\rho\colon X\to \R$ and $\rho'\colon X\to X^*$ have the following properties:
    \begin{enumerate}
    \item[(i)]
        The function $\rho$ is of class $C^{1}(X,\R)$ and $\rho'\colon X \to X^*$ is coercive, that is,
        \begin{align*}
        \frac{\langle\rho'(u),u\rangle_{X}}{\|u\|_X}\to +\infty \quad \text{as }\|u\|_X\to +\infty;
        \end{align*}
    \item[(ii)]
        $\rho'$ is strictly monotone operator.
    \item[(iii)]
        $\rho'$ is a mapping of type $(\Ss_+$), that is, if $u_{n}\rightharpoonup u$ in $X$ and $ \limsup_{n\to
        +\infty}\,\langle \rho'(u_{n}),u_{n}-u\rangle_{X} \leq 0$, then $u_{n}\to u$ in $X$.
    \end{enumerate}
\end{lemma}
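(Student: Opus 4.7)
The modular $\rho$ splits into four pieces---a Gagliardo-type double integral over $\R^{2N}\setminus(\mathcal{C}\Omega)^2$, an unweighted Lebesgue term on $\Omega$, and two weighted Lebesgue terms on $\mathcal{C}\Omega$ with the nonnegative weights $g$ and $\beta$---and each is, modulo its weight, a variable-exponent modular of the type whose $C^1$-regularity on the corresponding Lebesgue/Gagliardo space is classical; see Fan--Zhao \cite{Fan-Zhao-2001} and Bahrouni--R\u adulescu \cite[Lemma 4.1]{Bahrouni-Radulescu-2018}. Since $X$ embeds continuously into each of the four component spaces by the very definition of $\|\cdot\|_X$, assembling these pieces will give $\rho\in C^{1}(X,\R)$ with a term-by-term expression for $\rho'$. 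Coercivity will follow from the observation that $\langle \rho'(u),u\rangle$ is exactly the sum producing $\rho(u)$ but without the prefactors $1/p(x,y)$ and $1/\overline{p}(x)$, so $\langle \rho'(u),u\rangle\geq p^{-}\rho(u)$; combined with Lemma \ref{modular}(iii),
\begin{align*}
\frac{\langle \rho'(u),u\rangle}{\|u\|_X}\geq p^{-}\|u\|_X^{p^{-}-1}\to +\infty\quad\text{as }\|u\|_X\to +\infty,
\end{align*}
since $p^->1$. This settles (i).

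For (ii), I plan to apply the classical pointwise inequality $(|a|^{p-2}a-|b|^{p-2}b)(a-b)\geq 0$, valid for every $p>1$ with equality if and only if $a=b$, inside each of the four integrands that make up $\langle \rho'(u)-\rho'(v),u-v\rangle$. This exhibits the pairing as a sum of four nonnegative quantities, so $\rho'$ is monotone. If the pairing vanishes, the $\Omega$-Lebesgue term first forces $u=v$ a.e.\,in $\Omega$, and the Gagliardo term then gives $u(x)-u(y)=v(x)-v(y)$ for a.e.\,$(x,y)\in\R^{2N}\setminus(\mathcal{C}\Omega)^2$. A Fubini argument selects, for a.e.\,$y\in\mathcal{C}\Omega$, some $x\in\Omega$ with $u(x)=v(x)$ at which this difference identity holds, forcing $u(y)=v(y)$; hence $u=v$ a.e.\,in $\R^N$ and (ii) follows.

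The $(S_+)$-property in (iii) is the most delicate step. The standard monotonicity trick first yields
\begin{align*}
0\leq \limsup_{n\to\infty}\langle \rho'(u_n)-\rho'(u),u_n-u\rangle = \limsup_{n\to\infty}\langle \rho'(u_n),u_n-u\rangle\leq 0,
\end{align*}
since $\langle \rho'(u),u_n-u\rangle\to 0$ by $u_n\rightharpoonup u$ in $X$. Hence each of the four nonnegative summands of $\langle \rho'(u_n)-\rho'(u),u_n-u\rangle$ tends to zero. To upgrade this to convergence of the full modular $\rho(u_n-u)$, I plan to apply Simon-type inequalities pointwise, splitting each integration region according to whether the local exponent is $\geq 2$ or lies in $(1,2)$: on $\{p\geq 2\}$ the pairing directly bounds the corresponding modular integrand of $u_n-u$ from above, while on $\{1<p<2\}$ one rewrites the modular integrand as a product and estimates it via the variable-exponent H\"older inequality together with the uniform boundedness of $(u_n)$ in $X$. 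Summing the four resulting estimates delivers $\rho(u_n-u)\to 0$, and Lemma \ref{modular}(ii) then upgrades this to $\|u_n-u\|_X\to 0$. The main obstacle will be the careful Simon/H\"older bookkeeping for the Gagliardo piece, since $(x,y)$ ranges over $\R^{2N}\setminus(\mathcal{C}\Omega)^2$ rather than over a product domain and the exponent $p(x,y)$ varies.
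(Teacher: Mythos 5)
Your plan is correct and follows essentially the same route as the paper: coercivity via the observation that $\langle\rho'(u),u\rangle$ dominates $\rho(u)$ together with Lemma~\ref{modular}(iii); strict monotonicity via the pointwise monotonicity of $t\mapsto|t|^{p-2}t$ (the paper invokes Simon's inequalities here, which you also use in step (iii)); and the $(\Ss_+)$-property by the standard $\{p\ge 2\}/\{1<p<2\}$ split with variable-exponent H\"older estimates, which the paper outsources to Bahrouni--R\u{a}dulescu. Your explicit Fubini-style step propagating $u=v$ from $\Omega$ to $\mathcal{C}\Omega$ in part (ii) is a welcome detail the paper leaves implicit, since the vanishing of the $\beta$- and $g$-weighted terms alone need not force $u=v$ on $\mathcal{C}\Omega$.
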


\begin{proof}
    (i) Evidently, from the definition of $\rho$, we conclude that $\rho \in C^{1}(X,\R)$. By Lemma \ref{modular}, for $\|u\|_{X}>1$, we obtain
    \begin{align*}
    \langle\rho'(u),u\rangle_{X}\geq \rho(u)\geq \|u\|_X^{p^{-}}.
    \end{align*}
    Then
    \begin{align*}
    \frac{\langle\rho'(u),u\rangle_{X}}{\|u\|_X}\geq \|u\|_X^{p^{-}-1} \to +\infty
    \end{align*}
    as $\|u\|_{X}\to +\infty$ since $p^{-}>1 $.

    (ii) The strict monotonicity of $\rho'$ is a direct consequence of the well-known Simon inequalities
    \begin{align*}
    &\left|x-y\right|^{p}\leq  c_p\left(\left|x\right|^{p-2}x-\left|y\right|^{p-2}y\right)\cdot(x-y) \quad\text{if }p \geq 2,
    \end{align*}
    and
    \begin{align*}
    \begin{split}
        \left|x-y\right|^{p} \leq C_p & \left[\left(\left| x\right|^{p-2}x-\left|y\right|^{p-2}y\right)\cdot(x-y)\right]^\frac{p}{2}\\
        & \quad \times \left(\left|x\right|^p+\left|y\right|^p\right)^{\frac{2-p}{p}} \quad \text{if }p\in (1,2),
    \end{split}
    \end{align*}
    for all $x,y \in \R^N$, where $c_p$ and $C_p$ are positive constants depending only on $p$,
    see Lindqvist \cite[p.\,71]{Lindqvist-2006} or Filippucci, Pucci and R\u adulescu \cite[p. 713]{Filippucci-Pucci-Radulescu-2008}.

    (iii) By applying (i) and (ii), the proof of assertion (iii) is identical to the proof of Theorem  3.1 in Bahrouni and R\u{a}dulescu \cite{Bahrouni-Radulescu-2018}.
\end{proof}

Now we are interested in a nonlocal analogue of the divergence theorem also known as integration by parts formula. We have the following result.

\begin{proposition}
      Let hypotheses (S), (P), (G) and ($\beta$) be satisfied and let $u$ be any bounded $C^2$-function in $\R^N$. Then,
      \begin{align*}
      \int_{\Omega}\l(-\Delta\r)^s_{p(\cdot,\cdot)}u(x)\,dx
      =-\int_{\R^N\setminus\Omega}\mathcal{N}_{s,p(\cdot,\cdot)}u(x)\,dx.
      \end{align*}
\end{proposition}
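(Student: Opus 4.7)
The plan rests on the antisymmetry of the kernel
\begin{align*}
K(x,y):=\frac{|u(x)-u(y)|^{p(x,y)-2}(u(x)-u(y))}{|x-y|^{N+sp(x,y)}},
\end{align*}
namely $K(x,y)=-K(y,x)$, which follows from the symmetry of $p(\cdot,\cdot)$ combined with the sign flip of $u(x)-u(y)$ under $(x,y)\leftrightarrow (y,x)$. The argument is then a double application of Fubini after splitting $\R^N=\Omega\cup(\R^N\setminus\Omega)$ in the inner integration variable: the ``diagonal'' block $\Omega\times\Omega$ vanishes by the symmetry of its domain together with the antisymmetry of $K$, whereas the ``off-diagonal'' block $\Omega\times(\R^N\setminus\Omega)$, after swapping variables and using $K(x,y)=-K(y,x)$, yields precisely $-\int_{\R^N\setminus\Omega}\mathcal{N}_{s,p(\cdot,\cdot)}u(x)\,dx$.

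To handle the principal value, I would first regularize with the truncated kernel $K_\eps:=K\chi_{\{|x-y|>\eps\}}$. For each fixed $\eps>0$ one has $K_\eps\in L^1(\Omega\times\R^N)$: using $u\in L^\infty(\R^N)$ the factor $|u(x)-u(y)|^{p(x,y)-1}$ is bounded, $|x-y|^{-N-sp(x,y)}$ is integrable at infinity because $sp^->0$, and the singularity at the diagonal is removed by the cutoff. Fubini then gives
\begin{align*}
\int_\Omega\int_{\R^N}K_\eps(x,y)\,dy\,dx
=\int_\Omega\int_\Omega K_\eps(x,y)\,dy\,dx+\int_\Omega\int_{\R^N\setminus\Omega}K_\eps(x,y)\,dy\,dx.
\end{align*}
The first summand vanishes by the change of variables $(x,y)\leftrightarrow(y,x)$ on the symmetric domain $\{(x,y)\in\Omega\times\Omega:|x-y|>\eps\}$. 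For the second, I would swap the order of integration and apply antisymmetry to rewrite it as $-\int_{\R^N\setminus\Omega}\int_\Omega K_\eps(y,x)\,dx\,dy$; since for $y\in\R^N\setminus\overline{\Omega}$ one has $\dist(y,\Omega)>0$, the integrand $K(y,\cdot)$ is dominated uniformly in $\eps$ by an $L^1(\Omega)$-function of $x$, and letting $\eps\to 0^+$ yields $-\int_{\R^N\setminus\Omega}\mathcal{N}_{s,p(\cdot,\cdot)}u(y)\,dy$.

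The main technical point is then to pass $\eps\to 0^+$ on the left-hand side in order to recover $\int_\Omega(-\Delta)^s_{p(\cdot,\cdot)}u(x)\,dx$. Pointwise convergence on $\Omega$ holds by definition of the principal value, so the task is to produce an $\eps$-uniform $L^1(\Omega)$ dominant for $x\mapsto \int_{\{|x-y|>\eps\}}K(x,y)\,dy$. Here I would exploit that $u$ is bounded and of class $C^2$: for $x\in\Omega$ with $B_r(x)\subset\Omega$, a second-order Taylor expansion gives $u(y)-u(x)=\nabla u(x)\cdot(y-x)+\mathcal{O}(|y-x|^2)$, and the odd principal part $\nabla u(x)\cdot(y-x)$ is annihilated after the antisymmetric substitution $y-x\mapsto x-y$ on the symmetric annulus $\{\eps<|x-y|<r\}$, leaving a quadratic remainder integrable against $|x-y|^{-N-sp(x,y)}$ uniformly in $\eps$. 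Outside $B_r(x)$, boundedness of $u$ gives a crude bound by $C|x-y|^{-N-sp^-}$, which is also integrable. Combining these two bounds produces the sought $L^1(\Omega)$-dominant, and dominated convergence concludes. This antisymmetric Taylor cancellation in the presence of a variable exponent $p(\cdot,\cdot)$ is the delicate step, but it essentially mirrors the constant-exponent argument of Dipierro--Ros-Oton--Valdinoci and Mugnai--Lippi cited in the introduction.
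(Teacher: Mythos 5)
Your overall plan is exactly the paper's: antisymmetry $K(x,y)=-K(y,x)$ (from the symmetry of $p$), kill the $\Omega\times\Omega$ block by symmetry of that domain, and Fubini-swap the $\Omega\times\mathcal{C}\Omega$ block to read off $-\int_{\mathcal{C}\Omega}\mathcal{N}_{s,p(\cdot,\cdot)}u$. The extra care you take with the truncated kernel $K_\eps$ makes explicit a regularization that the paper leaves largely implicit.

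Two remarks on the dominant you construct for the passage $\eps\to0^+$ on the left-hand side, which is where your write-up is shakiest. First, the restriction \emph{``$B_r(x)\subset\Omega$''} is both unnecessary (Taylor's theorem at $x$ uses only $u\in C^2(\R^N)$; the ball need not sit inside $\Omega$) and counterproductive: if you insist on it, $r$ must shrink as $x\to\partial\Omega$, and the resulting bound is no longer an $L^1(\Omega)$-dominant uniform in $\eps$. Second, and more seriously, the reflection $y\mapsto 2x-y$ that you invoke to annihilate the odd term $\nabla u(x)\cdot(y-x)$ does \emph{not} preserve the exponent: hypothesis (P) gives $p(x,y)=p(y,x)$, not $p(x,y)=p(x,2x-y)$, so $\phi_{p(x,y)}$ and $\phi_{p(x,2x-y)}$ are different odd functions and the cancellation is not as clean as in the constant-exponent argument you are transplanting; you would need some modulus-of-continuity control on $p$ to repair it. Both difficulties are avoidable by arranging the argument as the paper implicitly does: for every fixed $\eps>0$ the truncated $\Omega\times\Omega$ integral
\begin{align*}
\int_\Omega\int_{\Omega\setminus B_\eps(x)}K(x,y)\,dy\,dx
=\iint_{\{(x,y)\in\Omega\times\Omega:\,|x-y|>\eps\}}K(x,y)\,dx\,dy
\end{align*}
vanishes identically by antisymmetry and the symmetry of the domain, so no dominant is needed there at all; and for the off-diagonal block one checks directly that $K\in L^1(\Omega\times\mathcal{C}\Omega)$ from $|u(x)-u(y)|\leq\min\l(2\|u\|_\infty,\|\nabla u\|_\infty|x-y|\r)$, which gives $|K(x,y)|\lesssim|x-y|^{-N-\gamma(x,y)}$ with $\gamma=1-(1-s)p<1$ near the diagonal and $|K(x,y)|\lesssim|x-y|^{-N-sp^-}$ at infinity; for a Lipschitz domain this yields absolute integrability, and dominated convergence finishes the proof without any Taylor expansion or interior-ball restriction.
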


\begin{proof}
    From (P) we know that $p$ is symmetric. We obtain
    \begin{align*}
    &\int_{\Omega}\int_{\Omega}|u(x)-u(y)|^{p(x,y)-2}\frac{(u(x)-u(y))}{|x-y|^{N+sp(x,y)}}\,dx\,dy\\
    &=-\int_{\Omega}\int_{\Omega}|u(x)-u(y)|^{p(x,y)-2}\frac{(u(y)-u(x))}{|x-y|^{N+sp(x,y)}}\,dx\,dy
        =0.
    \end{align*}
    It follows that
    \begin{align*}
    \int_{\Omega}\l(-\Delta\r)^s_{p(\cdot,\cdot)}u(x)\,dx
    &=\int_{\Omega}\lim_{\eps\to0}\int_{\R^N\setminus B_{\eps}(x)}|u(x)-u(y)|^{p(x,y)-2}\frac{(u(x)-u(y))}{|x-y|^{N+sp(x,y)}}\,dy\,dx\\
    &=\int_{\Omega}\lim_{\eps\to0}\l[\int_{\R^N\setminus\Omega}|u(x)-u(y)|^{p(x,y)-2}\frac{(u(x)-u(y))}{|x-y|^{N+sp(x,y)}}\,dy\right.\\
    &\l.\qquad+\int_{\Omega\setminus B_{\eps}(x)} |u(x)-u(y)|^{p(x,y)-2}\frac{(u(x)-u(y))}{|x-y|^{N+sp(x,y)}}dy\r]\,dx\\
    &=\int_{\Omega}\int_{\R^N\setminus\Omega} |u(x)-u(y)|^{p(x,y)-2}\frac{(u(x)-u(y))}{|x-y|^{N+sp(x,y)}}\,dy\,dx\\
    &=\int_{\R^N\setminus\Omega}\int_{\Omega}|u(x)-u(y)|^{p(x,y)-2}\frac{(u(x)-u(y))}{|x-y|^{N+sp(x,y)}}\,dx\,dy\\
    &=-\int_{\R^N\setminus\Omega}\mathcal{N}_{s,p(\cdot,\cdot)}u(y)\,dy.
    \end{align*}
\end{proof}

\begin{proposition}
    Let hypotheses (S), (P), (G) and ($\beta$) be satisfied.
    Let $u$ and $v$ be bounded $C^2$-functions in $\R^N$. Then,
    \begin{align*}
    &\frac{1}{2}\int_{\R^{2N}\setminus(\mathcal{C}\Omega)^2}|u(x)-u(y)|^{p(x,y)-2}\frac{(u(x)-u(y))(v(x)-v(y))}{|x-y|^{N+sp(x,y)}}\,dx\,dy\\
    &=\int_{\Omega}v(-\Delta)^s_{p(\cdot,\cdot)}u\,dx
    +\int_{\mathcal{C}\Omega}v\mathcal{N}_{s,p(\cdot,\cdot)}\, dx.
    \end{align*}
\end{proposition}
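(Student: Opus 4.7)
Set
\[
K(x,y):=|u(x)-u(y)|^{p(x,y)-2}\,\frac{(u(x)-u(y))(v(x)-v(y))}{|x-y|^{N+sp(x,y)}}.
\]
Since $p$ is symmetric by hypothesis (P), and since interchanging $x$ and $y$ turns $u(x)-u(y)$ and $v(x)-v(y)$ into their negatives while $|u(x)-u(y)|^{p(x,y)-2}$ and $|x-y|^{N+sp(x,y)}$ are unchanged, we have $K(x,y)=K(y,x)$. Decompose
\[
\R^{2N}\setminus(\mathcal{C}\Omega)^2=(\Omega\times\Omega)\cup(\Omega\times\mathcal{C}\Omega)\cup(\mathcal{C}\Omega\times\Omega),
\]
into disjoint sets. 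The symmetry of $K$ gives $\int_{\Omega\times\mathcal{C}\Omega}K\,dxdy=\int_{\mathcal{C}\Omega\times\Omega}K\,dxdy$, so
\[
\tfrac12\int_{\R^{2N}\setminus(\mathcal{C}\Omega)^2}K\,dxdy=\tfrac12\int_{\Omega\times\Omega}K\,dxdy+\int_{\Omega\times\mathcal{C}\Omega}K\,dxdy.
\]

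\textbf{The $\Omega\times\Omega$ piece.} Writing $v(x)-v(y)=v(x)-v(y)$ and splitting, then renaming $x\leftrightarrow y$ in the $v(y)$-term and invoking $p(x,y)=p(y,x)$, the two contributions coincide, so
\[
\tfrac12\int_{\Omega\times\Omega}K\,dxdy=\int_{\Omega}v(x)\int_{\Omega}|u(x)-u(y)|^{p(x,y)-2}\,\frac{u(x)-u(y)}{|x-y|^{N+sp(x,y)}}\,dy\,dx.
\]
Because $u\in C^2\cap L^\infty$, the inner integrand is absolutely integrable away from the diagonal and, by the standard symmetry cancellation near $y=x$, coincides with its principal-value version, so for each $x\in\Omega$,
\[
\int_{\Omega}|u(x)-u(y)|^{p(x,y)-2}\,\frac{u(x)-u(y)}{|x-y|^{N+sp(x,y)}}\,dy=\mathrm{p.v.}\int_{\Omega}\cdots\,dy.
\]

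\textbf{The $\Omega\times\mathcal{C}\Omega$ piece.} Expanding $v(x)-v(y)$ again,
\[
\int_{\Omega\times\mathcal{C}\Omega}K\,dxdy=\int_{\Omega}v(x)\int_{\mathcal{C}\Omega}|u(x)-u(y)|^{p(x,y)-2}\,\frac{u(x)-u(y)}{|x-y|^{N+sp(x,y)}}\,dy\,dx-\int_{\mathcal{C}\Omega}v(y)\int_{\Omega}|u(x)-u(y)|^{p(x,y)-2}\,\frac{u(x)-u(y)}{|x-y|^{N+sp(x,y)}}\,dx\,dy,
\]
where Fubini is justified because $u$ is bounded $C^2$ and $\Omega$ is bounded. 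In the last term, the identity $|u(x)-u(y)|^{p(x,y)-2}(u(x)-u(y))=-|u(y)-u(x)|^{p(x,y)-2}(u(y)-u(x))$ combined with the definition \eqref{Neumann boundary} of $\mathcal{N}_{s,p(\cdot,\cdot)}$ gives, for $y\in\mathcal{C}\Omega$,
\[
\int_{\Omega}|u(x)-u(y)|^{p(x,y)-2}\,\frac{u(x)-u(y)}{|x-y|^{N+sp(x,y)}}\,dx=-\mathcal{N}_{s,p(\cdot,\cdot)}u(y),
\]
so that second contribution equals $\int_{\mathcal{C}\Omega}v\,\mathcal{N}_{s,p(\cdot,\cdot)}u\,dy$.

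\textbf{Conclusion.} Summing, the $\Omega\times\Omega$ part and the $v(x)$-part of the $\Omega\times\mathcal{C}\Omega$ piece combine into
\[
\int_{\Omega}v(x)\left[\mathrm{p.v.}\int_{\R^N}|u(x)-u(y)|^{p(x,y)-2}\,\frac{u(x)-u(y)}{|x-y|^{N+sp(x,y)}}\,dy\right]dx=\int_{\Omega}v\,(-\Delta)^{s}_{p(\cdot,\cdot)}u\,dx,
\]
and the remaining term is $\int_{\mathcal{C}\Omega}v\,\mathcal{N}_{s,p(\cdot,\cdot)}u\,dx$, which proves the formula. The only delicate point is the passage between the pointwise principal-value definition of $(-\Delta)^{s}_{p(\cdot,\cdot)}u$ and the absolutely convergent Lebesgue integral appearing after the symmetry argument on $\Omega\times\Omega$; this is handled exactly as in the preceding proposition, using the $C^2$-regularity of $u$ together with $sp^+<N$ from (P) to cancel the singular part on $B_\varepsilon(x)\cap\Omega$ and the boundedness of $u$ to control the tail.
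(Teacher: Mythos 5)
Your proof is correct and rests on the same symmetry idea as the paper: the antisymmetry of the kernel $|u(x)-u(y)|^{p(x,y)-2}(u(x)-u(y))/|x-y|^{N+sp(x,y)}$ together with $p(x,y)=p(y,x)$ converts the factor $\tfrac12\bigl(v(x)-v(y)\bigr)$ into $v(x)$, after which the two contributions are read off directly from definitions \eqref{operator} and \eqref{Neumann boundary}. The only difference is organizational: the paper symmetrizes over all of $\R^{2N}\setminus(\mathcal{C}\Omega)^2$ at once and then splits according to whether $x\in\Omega$ or $x\in\mathcal{C}\Omega$, while you decompose into $\Omega\times\Omega$, $\Omega\times\mathcal{C}\Omega$, $\mathcal{C}\Omega\times\Omega$ first and symmetrize piecewise; these are the same computation. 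One small imprecision worth flagging: the claim that the inner integral over $\Omega$ ``coincides with its principal-value version'' overstates the situation, since when $\overline{p}(x)(1-s)\le 1$ the integrand $\sim |x-y|^{\overline{p}(x)(1-s)-1-N}$ is not absolutely integrable near the diagonal even for $u\in C^2$; what the argument really needs, and what holds for $C^2$ functions by the Taylor-cancellation you invoke, is merely that the principal value exists — which is precisely the meaning assigned to $(-\Delta)^s_{p(\cdot,\cdot)}u$ in \eqref{operator}.
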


\begin{proof}
    By symmetry, we have
    \begin{align*}
    & \frac{1}{2}\int_{\R^{2N}\setminus(\mathcal{C}\Omega)^2}|u(x)-u(y)|^{p(x,y)-2}\frac{(u(x)-u(y))(v(x)-v(y))}{|x-y|^{N+sp(x,y)}}\,dx\,dy\\
    &=\int_{\Omega}\int_{\R^{N}}v(x)|u(x)-u(y)|^{p(x,y)-2}\frac{(u(x)-u(y))}{|x-y|^{N+sp(x,y)}}\,dy\,dx\\
    &\quad +\int_{\mathcal{C}\Omega}\int_{\Omega}v(x)|u(x)-u(y)|^{p(x,y)-2}\frac{(u(x)-u(y))}{|x-y|^{N+sp(x,y)}}\,dy\,dx.
    \end{align*}
    Thus, using \eqref{operator} and \eqref{Neumann boundary}, the identity follows.
\end{proof}

Based on the integration by parts formula we are now in the position to state the natural definition of a weak solution for problem \eqref{eq}. First, to simplify the notation, for arbitrary functions $u,v\colon \R^N\to\R$, we set
\begin{align*}
    \mathcal{A}_{s,p}(u,v)
    &=\frac{1}{2}\int_{\R^{2N}\setminus(\mathcal{C}\Omega)^2}|u(x)-u(y)|^{p(x,y)-2}\frac{(u(x)-u(y))(v(x)-v(y))}{|x-y|^{N+sp(x,y)}}\,dx\,dy\\
    &\quad +\int_{\Omega}|u|^{\overline{p}(x)-2}uv\,dx+ \int_{\mathcal{C}\Omega}\beta(x)|u|^{\overline{p}(x)-2}uv\,dx.
\end{align*}
We say that $u\in X$ is a weak solution of \eqref{eq}, if
\begin{align}\label{weak_solution_Neumann}
      \mathcal{A}_{s,p}(u,v)
      =\int_{\Omega}f(x,u)v\, dx+\int_{\mathcal{C}\Omega}gv\, dx.
\end{align}
is satisfied for every $v \in X$. As a consequence of this definition, we have the following result.

\begin{proposition}
    Let hypotheses (S), (P), (G) and ($\beta$) be satisfied. and let u be a weak solution of \eqref{eq}. Then,
    \begin{align*}
    \mathcal{N}_{s,p(\cdot,\cdot)}u+\beta(x)|u|^{\overline{p}(x)-2}u =g  \quad \text{a.\,e.\,in }\R^N\setminus\overline{\Omega}.
    \end{align*}
\end{proposition}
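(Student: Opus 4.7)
The plan is to test the weak formulation \eqref{weak_solution_Neumann} against functions $v \in X$ that are supported in $\mathcal{C}\Omega$, and then apply a fundamental-lemma-type argument. The idea is that such test functions see only the boundary integrals and thus isolate the relation claimed in the statement.

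First I would fix any $v \in X$ with $v \equiv 0$ on $\Omega$ and plug it into \eqref{weak_solution_Neumann}. The right-hand side simplifies immediately: since $v=0$ in $\Omega$, the term $\int_\Omega f(x,u)v\,dx$ vanishes and only $\int_{\mathcal{C}\Omega}gv\,dx$ survives. On the left-hand side, the term $\int_\Omega |u|^{\overline{p}(x)-2}u\,v\,dx$ vanishes for the same reason, while the term $\int_{\mathcal{C}\Omega}\beta(x)|u|^{\overline{p}(x)-2}u\,v\,dx$ is already in the desired form.

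Next I would rewrite the double integral in $\mathcal{A}_{s,p}(u,v)$ for this choice of $v$. The domain $\R^{2N}\setminus(\mathcal{C}\Omega)^2$ decomposes as $(\Omega\times\Omega)\cup(\Omega\times\mathcal{C}\Omega)\cup(\mathcal{C}\Omega\times\Omega)$; on $\Omega\times\Omega$ the factor $v(x)-v(y)$ vanishes, and by the symmetry of $p(\cdot,\cdot)$ together with the antisymmetry of the kernel $|u(x)-u(y)|^{p(x,y)-2}(u(x)-u(y))/|x-y|^{N+sp(x,y)}$ under swapping $x\leftrightarrow y$, the two remaining pieces are equal. A direct computation (in the same spirit as the integration-by-parts proposition proved just above) then reduces $\mathcal{A}_{s,p}(u,v)$ to
\begin{align*}
    \int_{\mathcal{C}\Omega} v(x)\,\mathcal{N}_{s,p(\cdot,\cdot)}u(x)\,dx
    +\int_{\mathcal{C}\Omega}\beta(x)|u|^{\overline{p}(x)-2}u\,v\,dx.
\end{align*}
Consequently \eqref{weak_solution_Neumann} becomes
\begin{align*}
    \int_{\mathcal{C}\Omega}\l(\mathcal{N}_{s,p(\cdot,\cdot)}u+\beta(x)|u|^{\overline{p}(x)-2}u-g\r)v\,dx=0
\end{align*}
for every admissible $v$.

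Finally I would invoke a density/fundamental-lemma argument: the class of $v\in X$ vanishing on $\Omega$ is rich enough (for instance it contains $v\chi_{\mathcal{C}\Omega}$ for any sufficiently regular $v$ compactly supported in $\mathcal{C}\Omega$) that the above identity forces the integrand to vanish almost everywhere in $\R^N\setminus\overline{\Omega}$, which is the desired conclusion. The main obstacle I anticipate is the bookkeeping step of reducing the double integral: one must carefully justify Fubini, exploit the symmetry $p(x,y)=p(y,x)$, and recognize the nonlocal normal derivative $\mathcal{N}_{s,p(\cdot,\cdot)}u(x)=\int_\Omega|u(x)-u(y)|^{p(x,y)-2}(u(x)-u(y))/|x-y|^{N+sp(x,y)}\,dy$ inside the resulting integral, as well as verify that enough test functions supported in $\mathcal{C}\Omega$ lie in $X$ to conclude the a.e. identity.
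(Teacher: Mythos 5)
Your proposal is correct and follows essentially the same route as the paper: test the weak formulation with $v\in X$ vanishing on $\Omega$, reduce the nonlocal form $\mathcal{A}_{s,p}(u,v)$ to $\int_{\mathcal{C}\Omega}v\,\mathcal{N}_{s,p(\cdot,\cdot)}u\,dx+\int_{\mathcal{C}\Omega}\beta|u|^{\overline{p}(x)-2}uv\,dx$ using the symmetry of $p$ and the antisymmetry of the kernel, and conclude by the fundamental lemma with $v\in C^\infty_c(\R^N\setminus\overline\Omega)$.
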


\begin{proof}
    First, we take $v\in X$ such that $v\equiv0$ in $\Omega$ as a test function in \eqref{weak_solution_Neumann}. Then
    \begin{align*}
    &\int_{\mathcal{C}\Omega}gv\, dx\\
    & = \mathcal{A}_{s,p}(u,v)\\
    &= -\frac{1}{2}\int_{\Omega}\int_{\R^{N}\setminus\overline{\Omega}}\frac{|u(x)-u(y)|^{p(x,y)-2}(u(x)-u(y))v(y)}{|x-y|^{N+sp(x,y)}}\,dy\,dx\\
    &\qquad +\frac{1}{2}\int_{\R^{N}\setminus\overline{\Omega}}\int_{\Omega}\frac{|u(x)-u(y)|^{p(x,y)-2}(u(x)-u(y))v(x)}                                   {|x-y|^{N+sp(x,y)}}\,dy\,dx
    + \int_{\mathcal{C}\Omega}\beta(x)|u|^{\overline{p}(x)-2}uv\,dx\\
    &=\int_{\Omega}\int_{\R^{N}\setminus\overline{\Omega}}\frac{|u(x)-u(y)|^{p(x,y)-2}(u(x)-u(y))v(y)} {|x-y|^{N+sp(x,y)}}\,dy\,dx+ \int_{\mathcal{C}\Omega}\beta(x)|u|^{\overline{p}(x)-2}uv\,dx\\
    &=\int_{\R^{N}\setminus\overline{\Omega}}v(x)\int_{\Omega}\frac{|u(x)-u(y)|^{p(x,y)-2}(u(x)-u(y))} {|x-y|^{N+sp(x,y)}}\,dx\,dy+ \int_{\mathcal{C}\Omega}\beta(x)|u|^{\overline{p}(x)-2}uv\,dx\\
    &=\int_{\R^{N}\setminus\overline{\Omega}}v(y)\mathcal{N}_{s,p(\cdot,\cdot)}u(y)\,dy+ \int_{\mathcal{C}\Omega}\beta(x)|u|^{\overline{p}(x)-2}uv\,dx.
    \end{align*}
    Therefore,
    \begin{align*}
    \int_{\R^{N}\setminus\overline{\Omega}}\Big(\mathcal{N}_{s,p(\cdot,\cdot)}u(x)+\beta(x)|u|^{\overline{p}(x)-2}u-g(x)\Big)v(x)\,dx=0
    \end{align*}
    for every $v\in X$ which is $0$ in $\Omega$. In particular, this is true for every  $v\in C^{\infty}_{c}(\R^{N}\setminus\overline{\Omega})$, and so
    \begin{align*}
    \mathcal{N}_{s,p(\cdot,\cdot)}u(x)
    +\beta(x)|u|^{\overline{p}(x)-2}u
    =  g(x) \quad \text{a.\,e.\,in } \R^N\setminus\overline{\Omega}.
    \end{align*}
\end{proof}

\begin{proposition}\label{prp4}
    Let hypotheses (S), (P), (G) and ($\beta$) be satisfied. Let $I\colon X\to\R$ be the functional defined by
    \begin{align*}
    I(u)&=
    \int_{\R^{2N}\setminus(\mathcal{C}\Omega)^2}\frac{|u(x)-u(y)|^{p(x,y)}}{2p(x,y)|x-y|^{N+sp(x,y)}}\,dx\,dy
    +\int_{\mathcal{C}\Omega}\frac{\beta(x)|u|^{\overline{p}(x)}v}{\overline{p}(x)}\,dx\\
    &\quad -\int_{\Omega}f(x,u)u\,dx-\int_{\mathcal{C}\Omega}gu\,dx
    \quad \text{ for every }u \in X.
    \end{align*}
    Then any critical point of $I$ is a weak solution of problem \eqref{eq}.
\end{proposition}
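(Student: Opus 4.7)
The plan is to show that $I \in C^{1}(X,\R)$ and that, for every $v \in X$,
\begin{align*}
\langle I'(u), v\rangle = \mathcal{A}_{s,p}(u,v) - \int_{\Omega} f(x,u)\,v\,dx - \int_{\mathcal{C}\Omega} g\,v\,dx.
\end{align*}
Once this identity is in place, the definition of a critical point gives $\langle I'(u), v\rangle = 0$ for every $v\in X$, which reads exactly as \eqref{weak_solution_Neumann}, and the conclusion is immediate.

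To justify the identity I would differentiate $I$ term by term. The nonlocal contribution
\begin{align*}
J(u) = \int_{\R^{2N}\setminus(\mathcal{C}\Omega)^2} \frac{|u(x)-u(y)|^{p(x,y)}}{2p(x,y)|x-y|^{N+sp(x,y)}}\,dx\,dy
\end{align*}
is handled as in Bahrouni and R\u{a}dulescu \cite[Lemma 4.1]{Bahrouni-Radulescu-2018}: by the symmetry assumption (P), its directional derivative at $u$ in direction $v$ is precisely the double-integral summand of $\mathcal{A}_{s,p}(u,v)$ (the factor $1/2$ cancels against symmetrization). The two variable-exponent potential terms $\int_{\Omega} |u|^{\overline{p}(x)}/\overline{p}(x)\,dx$ and $\int_{\mathcal{C}\Omega} \beta(x)|u|^{\overline{p}(x)}/\overline{p}(x)\,dx$ differentiate pointwise and contribute $\int_{\Omega}|u|^{\overline{p}(x)-2}uv\,dx$ and $\int_{\mathcal{C}\Omega}\beta(x)|u|^{\overline{p}(x)-2}uv\,dx$, which are the remaining summands of $\mathcal{A}_{s,p}(u,v)$. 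The linear term $u\mapsto\int_{\mathcal{C}\Omega} gu\,dx$ is continuous on $X$ because its boundedness by $\|u\|_X$ is encoded in the weighted piece $\||g|^{1/\overline{p}(\cdot)}u\|_{L^{\overline{p}(\cdot)}(\mathcal{C}\Omega)}$ of the norm via the variable-exponent H\"older inequality, and it equals its own derivative. Finally, for the Nemytskij-type term associated to the primitive $F(x,t)=\int_{0}^{t} f(x,\tau)\,d\tau$, the compact embedding $X \hookrightarrow L^{r(\cdot)}(\Omega)$ from Proposition \ref{compact} together with the subcritical growth imposed on $f$ in Section \ref{section_4} gives, by a standard Nemytskij argument, the derivative $v\mapsto \int_{\Omega} f(x,u)v\,dx$.

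The main obstacle will be the nonlocal term $J$, whose integrand is singular along the diagonal $\{x=y\}$ and depends on $u$ through the finite difference $u(x)-u(y)$. The strategy is to fix $u,v\in X$, write the difference quotient
\begin{align*}
\Delta_{\tau}(x,y) := \frac{|u(x)+\tau v(x) - u(y)-\tau v(y)|^{p(x,y)} - |u(x)-u(y)|^{p(x,y)}}{\tau\, p(x,y)\,|x-y|^{N+sp(x,y)}},
\end{align*}
and control it pointwise, uniformly for $\tau$ in a bounded neighborhood of $0$, via the elementary inequality $\bigl||a+b|^{p}-|a|^{p}\bigr| \le C(p)\,(|a|^{p-1}|b|+|b|^{p})$ applied with $a = u(x)-u(y)$ and $b = \tau(v(x)-v(y))$. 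The resulting majorant is a sum of two terms of the form $c\,|u(x)-u(y)|^{p(x,y)-1}|v(x)-v(y)|\,|x-y|^{-N-sp(x,y)}$ and $c\,|v(x)-v(y)|^{p(x,y)}\,|x-y|^{-N-sp(x,y)}$, each integrable on $\R^{2N}\setminus(\mathcal{C}\Omega)^{2}$ by the variable-exponent H\"older inequality combined with $u,v\in X$ and Lemma \ref{modular}. Dominated convergence then identifies $\lim_{\tau\to 0} \int \Delta_{\tau}$ with the first summand of $\mathcal{A}_{s,p}(u,v)$, and the same estimate yields continuity of $u\mapsto J'(u)$, so that $I\in C^{1}(X,\R)$. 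Assembling the four contributions gives the claimed formula for $\langle I'(u),v\rangle$ and finishes the proof.
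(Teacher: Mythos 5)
Your proof is correct and in fact substantially more complete than what the paper provides. The paper's own proof restricts itself to verifying that $I$ is finite on $X$ (using condition (F), and the H\"older estimates \eqref{fc}--\eqref{gc}) and then closes with ``the rest follows by standard argument.'' You actually supply that standard argument: you compute the Gateaux derivative term by term, use the symmetry of $p$ to match the nonlocal term against the double integral in $\mathcal{A}_{s,p}$, use the $(|a+b|^p-|a|^p)$ estimate together with dominated convergence to justify differentiation under the integral, and invoke the compact embedding of Proposition~\ref{compact} for the Nemytskij part. This is the right route, and it is the argument the paper implicitly relies on.

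One observation worth making explicit: as printed, the formula for $I$ in the proposition omits the $\int_{\Omega}|u|^{\overline{p}(x)}/\overline{p}(x)\,dx$ term, contains a stray factor $v$ in the $\beta$-integral, and has $\int_{\Omega}f(x,u)\,u\,dx$ where the primitive $F(x,u)=\int_0^u f(x,\tau)\,d\tau$ is clearly intended (otherwise the derivative would produce $\int_\Omega (f_t(x,u)u+f(x,u))v\,dx$, not $\int_\Omega f(x,u)v\,dx$, and the conclusion would fail). You silently worked with the corrected functional -- the same one used in Section~\ref{section_4} -- which is the correct reading. Making this correction explicit, as you did, is preferable to following the printed statement literally. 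Similarly, the paper's proof invokes ``condition (F)'' and a weight $b(x)$ neither of which has been introduced at that point of the text; your phrasing via ``the subcritical growth imposed on $f$ in Section~\ref{section_4}'' is cleaner and self-consistent.
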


\begin{proof}
      We only show that $\mathrm{I}$ is well defined on $X$. The rest follows by standard argument.

      Applying H\"older's inequality and condition (F), we have
      \begin{align}\label{fc}
      \begin{split}
          \int_{\Omega}f(x,u)u\,dx
          & \leq \int_{\Omega} b(x)|u|^{q(x)-1}u\,dx
          \leq \int_{\Omega} b(x)|u|^{q(x)}\,dx\\
          &\leq c \|b\|_{r(\cdot)}\l\||u|^{q(\cdot)}\r\|_{r'(\cdot)}<\infty.
      \end{split}
      \end{align}
      Again, by Proposition \ref{pprp1} and condition (G), we infer that
      \begin{align}\label{gc}
      \begin{split}
          \int_{\mathcal{C}\Omega}gu\,dx
          &\leq\int_{\mathcal{C}\Omega}|g|^{\frac{1}{\overline{p}'(x)}}|g|^{\frac{1}{\overline{p}(x)}} |u|\,dx
          \leq 2\|g\|_{L^1(\mathcal{C}\Omega)}\l \|\l|g\r|^{\frac{1}{\overline{p}(\cdot)}}u\r\|_{L^{\overline{p}(\cdot)}(\mathcal{C}\Omega)}
          \leq C\|u\|_{X}.
    \end{split}
    \end{align}
    Combining \eqref{fc} and \eqref{gc}, we conclude that
    $\mathrm{I}$ is well defined.
\end{proof}

\section{Existence results for fractional Robin problems with variable exponent}\label{section_4}

In this section we suppose conditions (S), (P), ($\beta$) and
\begin{enumerate}
    \item[(F)]
    Let $g \equiv 0$ and let $f\colon \Omega \times \R \to \R$ be a Carath\'eodory function given by
    \begin{align*}
        f(x,u)=\lambda V(x)|u|^{q(x)-2}u \quad \text{for all } x\in \Omega,
    \end{align*}
    where $q\in C_{+}(\Omega)$ such that $1<q(x)<\overline{p}(x)$ in $\close$ and with
    \begin{align*}
        V\in L^{r(\cdot)} \text{ such that }r\in C_{+}(\Omega) \text{ and } 1<r'(x)q(x)<p^*_{s}(x) \text{ for all }x\in \close.
    \end{align*}
    Moreover, we suppose that there exists a nonempty subset $\Omega_{0}\subset \Omega$ such that
    \begin{align*}
        V(x)>0 \quad \text{for all }x\in \Omega_0.
    \end{align*}
\end{enumerate}

The aim of this section is to prove the existence of at least one weak solution of \eqref{eq} when the parameter $\lambda>0$ is small enough. The proof is based in the results of the previous section in combination with variational methods.

First we introduce the variational setting for problem \eqref{eq}. To this end, we denote by $I\colon X\to \R$ the energy function of
problem \eqref{eq} which is given by
\begin{align*}
    I(u)
    & =\int_{\R^{2N}\setminus(\mathcal{C}\Omega)^2}\frac{|u(x)-u(y)|^{p(x,y)}}{2p(x,y)|x-y|^{N+sp(x,y)}}\,dx\,dy
    + \int_{\Omega}\frac{|u|^{\overline{p}(x)}}{\overline{p}(x)}\,dx\\
    &\quad +\int_{\mathcal{C}\Omega} \frac{\beta(x)|u|^{\overline{p}(x)}v}{\overline{p}(x)}\,dx
    -\lambda\int_{\Omega}\frac{V(x)}{q(x)}|u|^{q(x)}\,dx.
\end{align*}

Note that under the assumptions (S), (P), ($\beta$) and (F) along with Proposition \ref{prp4} it is easy to see that the functional $I$ is well-defined, of class $C^{1}$ on $X$ and any critical point of $I$is a weak solution of problem \eqref{eq}.

We start with two auxiliary results.

\begin{lemma}\label{pos}
    Let hypotheses (S), (P), (F) and ($\beta$) be satisfied. Then there is $\lambda^*>0$ such that for any $\lambda \in (0,\lambda^*)$ there exist $\rho>0$ and $a>0$ such that
    \begin{align*}
    I(u)\geq a>0 \quad \text{for any } u\in X \text{ with }
    \|u\|=\rho.
    \end{align*}
\end{lemma}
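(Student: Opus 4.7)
The plan is a standard ``mountain-foot'' estimate: decompose $I$ into its modular part and the subcritical perturbation, and show that on a sufficiently small sphere the first strictly dominates the second.

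First I would observe that under (F) we have $g\equiv 0$, so the modular $\rho(\cdot)$ from \eqref{equivalent_norm} consists exactly of the Gagliardo double integral, the $\overline{p}(\cdot)$-integral on $\Omega$ and the $\beta$-weighted integral on $\mathcal{C}\Omega$. Since $I$ carries only a factor $\tfrac{1}{2}$ in front of the double-integral piece and keeps the other two pieces with weight one, one obtains the elementary comparison
$$I(u)+\lambda\into\frac{V(x)}{q(x)}|u|^{q(x)}\,dx\ \geq\ \tfrac{1}{2}\rho(u).$$
For $\|u\|_{X}<1$, Lemma \ref{modular}(ii) then yields $\tfrac{1}{2}\rho(u)\geq \|u\|_{X}^{p^{+}}/(2\cdot 4^{p^{+}-1})$, which supplies the ``positive'' lower bound proportional to $\|u\|_{X}^{p^{+}}$.

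For the perturbation I would apply the variable-exponent H\"older inequality with the conjugate pair $r(\cdot),r'(\cdot)$ and use $V\in L^{r(\cdot)}(\Omega)$ from (F) to get
$$\into V(x)|u|^{q(x)}\,dx\leq c\,\|V\|_{L^{r(\cdot)}(\Omega)}\,\bigl\||u|^{q(\cdot)}\bigr\|_{L^{r'(\cdot)}(\Omega)}.$$
The crucial reduction is then to pass from the Luxemburg norm of $|u|^{q(\cdot)}$ in $L^{r'(\cdot)}(\Omega)$ to a power of the norm of $u$ in $L^{r'(\cdot)q(\cdot)}(\Omega)$: a direct inspection of the Luxemburg infimum shows that whenever $\|u\|_{L^{r'(\cdot)q(\cdot)}(\Omega)}<1$ one has $\||u|^{q(\cdot)}\|_{L^{r'(\cdot)}(\Omega)}\leq \|u\|_{L^{r'(\cdot)q(\cdot)}(\Omega)}^{q^{-}}$. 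Condition (F) guarantees $r'(x)q(x)<p_{s}^{*}(x)$, so Proposition \ref{compact} yields $\|u\|_{L^{r'(\cdot)q(\cdot)}(\Omega)}\leq \alpha\|u\|_{X}$; chaining the three inequalities gives $\lambda\into \frac{V(x)}{q(x)}|u|^{q(x)}\,dx\leq C\lambda\|u\|_{X}^{q^{-}}$ for $\|u\|_{X}$ small.

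Combining both halves on the sphere $\|u\|_{X}=\rho<1$ yields
$$I(u)\geq C_{1}\rho^{p^{+}}-C_{2}\lambda\rho^{q^{-}}=\rho^{q^{-}}\bigl(C_{1}\rho^{p^{+}-q^{-}}-C_{2}\lambda\bigr).$$
Because (F) imposes $q(x)<\overline{p}(x)\leq p^{+}$ pointwise in $\close$, the strict inequality $q^{-}<p^{+}$ holds, so the bracket is positive for $\lambda$ below a threshold $\lambda^{*}=\lambda^{*}(\rho)$; one fixes $\rho\in(0,1)$ first and then defines $\lambda^{*}$ accordingly to produce a uniform $a=\tfrac{1}{2}C_{1}\rho^{p^{+}}>0$. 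The main technical obstacle is the translation step in the third paragraph: variable exponents prevent $\||u|^{q(\cdot)}\|_{L^{r'(\cdot)}(\Omega)}$ from being an algebraic power of $\|u\|_{L^{r'(\cdot)q(\cdot)}(\Omega)}$, so the Luxemburg infimum must be tracked directly and the regimes $\|u\|_{L^{r'(\cdot)q(\cdot)}(\Omega)}\lessgtr 1$ distinguished via Proposition \ref{pprp1}; once this is done the remaining steps are a routine mountain-geometry computation.
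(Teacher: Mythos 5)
Your proposal is correct and follows essentially the same route as the paper: bound the perturbation by a multiple of $\|u\|_{X}^{q^{-}}$ via the variable-exponent H\"older inequality together with Proposition~\ref{compact}, bound the modular part from below by a multiple of $\|u\|_{X}^{p^{+}}$ on the sphere via Lemma~\ref{modular}, and exploit $q^{-}<p^{+}$ to make the bracket positive for small $\lambda$. A minor but genuine improvement in your write-up is that you explicitly let $\lambda^{*}$ depend on the already-fixed radius $\rho$; the paper's stated $\lambda^{*}$ does not depend on $\rho$ and, taken literally, forces $\rho^{p^{+}-q^{-}}>1/2$ in addition to $\rho<\min(1,1/\alpha)$, which may be vacuous. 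You also spell out the Luxemburg-norm reduction $\||u|^{q(\cdot)}\|_{L^{r'(\cdot)}(\Omega)}\leq\|u\|_{L^{r'(\cdot)q(\cdot)}(\Omega)}^{q^{-}}$ (valid in the regime $\|u\|_{L^{r'(\cdot)q(\cdot)}(\Omega)}<1$), which the paper compresses into ``H\"older's inequality and Proposition~\ref{pprp1}''; this is the right place to be careful, and your treatment of it is sound.
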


\begin{proof}
    From Proposition \ref{compact} we have
    \begin{align}\label{g11}
    \|u\|_{r'(\cdot)q(\cdot)}\leq \alpha \|u\|_X \quad \text{for all }u\in X.
    \end{align}
    Fix $\rho \in \l(0, \min\l(1,\frac{1}{\alpha}\r)\r)$. Then inequality \eqref{g11} implies that
    \begin{align*}
    \|u\|_{r'(x)q(x)}<1 \quad \text{for all }u\in X \text{ with } \|u\|_{X}=\rho.
    \end{align*}
    Thus, by applying H\"older's inequality and Proposition \ref{pprp1}, we get
    \begin{align}\label{g12}
    \begin{split}
        \int_{\Omega}V(x)|u|^{q(x)}\,dx
        &\leq 2 \|V\|_{r(\cdot)}\|u\|_{r'(\cdot)q(\cdot)}^{q^{-}}\\
        &\leq 2 \alpha^{q^{-}}\|V\|_{r(\cdot)}\|u\|_{X}^{q^{-}} \quad \text{for all }u\in X \text{ with }  \|u\|_{X}=\rho.
    \end{split}
    \end{align}
    Hence, using \eqref{g12} and Lemma \ref{modular}, we obtain for any $u\in X$ with $\|u\|_{X}=\rho$ that
    \begin{align*}
    I(u)
    &=\int_{\R^{2N}\setminus(\mathcal{C}\Omega)^2}\frac{|u(x)-u(y)|^{p(x,y)}}{p(x,y)|x-y|^{N+sp(x,y)}}\,dx\,dy +\int_{\mathcal{C}\Omega} \frac{\beta(x)|u|^{\overline{p}(x)}v}{\overline{p}(x)}\,dx
    +\int_{\Omega}\frac{|u|^{\overline{p}(x)}}{\overline{p}(x)}\,dx\\
    &\quad -\lambda \int_{\Omega}\frac{V(x)}{q(x)}|u|^{q(x)}\,dx\\
    &\geq \frac{1}{p^{+}}\l(\int_{\R^{2N}\setminus(\mathcal{C}\Omega)^2}\frac{|u(x)-u(y)|^{p(x,y)}}{|x-y|^{N+sp(x,y)}}\,dx\,dy +\int_{\mathcal{C}\Omega}\beta(x)|u|^{\overline{p}(x)}\,dx
    +\int_{\Omega}|u|^{\overline{p}(x)}\,dx\r)\\
    &\quad - \lambda\frac{1}{q^{-}}\int_{\Omega}V(x)|u|^{q(x)}\,dx\\
    &\geq\frac{1}{p^{+}3^{p^{+}-1}}\|u\|_{X}^{p^{+}}
    -\lambda\frac{2 \alpha^{q^{-}}\|V\|_{r(\cdot)}}{q^{-}}\|u\|_{X}^{q^{-}}\\
    &=\frac{1}{p^{+}3^{p^{+}-1}}\rho^{p^{+}}
    -\lambda\frac{2 \alpha^{q^{-}}\|V\|_{r(\cdot)}}{q^{-}}\rho^{q^{-}}\\
    &=\rho^{q^{-}}\l(\frac{1}{p^{+}3^{p^{+}-1}} \rho^{p^{+}-q^{-}}-\lambda\frac{2 \alpha^{q^{-}}\|V\|_{r(\cdot)}}{q^{-}}\r).
    \end{align*}
    We set
    \begin{align*}
    \lambda^*=
    \frac{q^{-}}{4p^{+}3^{p^{+}-1} \alpha^{q^{-}}\|V\|_{r(\cdot)}}.
    \end{align*}
    Then, combining this with the inequality above gives
    \begin{align*}
    I(u)\geq \frac{1}{2p^{+}3^{p^{+}-1}}=a>0 \quad \text{for all }u\in X \text{ with }\|u\|_X=\rho,
    \end{align*}
    where $\lambda \in (0,\lambda^*)$. This completes the proof.
\end{proof}

\begin{lemma}\label{neg}
    Let hypotheses (S), (P), (F) and ($\beta$) be satisfied. Then, there exists $\varphi \in X$ such that
    \begin{align*}
    I(t\varphi)<0 \quad \text{for } t>0 \text{ small enough}.
    \end{align*}
\end{lemma}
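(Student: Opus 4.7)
The plan is to exploit the subcritical growth $q(x)<\overline{p}(x)$ from hypothesis (F) by choosing a test function concentrated where $V$ is positive and then scaling it down. The guiding intuition comes from the constant-exponent case, where $I(t\varphi)\sim t^{p}C_1-\lambda t^{q}C_2$ is clearly negative for small $t$ when $q<p$. I would take $\varphi\in C_c^{\infty}(\Omega)$ non-negative and not identically zero, with $\supp\varphi\subseteq\Omega_0$ (assuming $\Omega_0$ has nonempty interior, which is how this hypothesis is meant to be used), and normalize so that $\|\varphi\|_{\infty}\leq 1$. Two features make this choice useful: first, since $\varphi$ vanishes on $\mathcal{C}\Omega$, the Robin boundary integral $\int_{\mathcal{C}\Omega}\beta(x)|t\varphi|^{\overline{p}(x)}/\overline{p}(x)\,dx$ drops out of $I(t\varphi)$; second, because $V(x)>0$ on $\Omega_0\supseteq\supp\varphi$, the constant $c_0:=\int_\Omega V(x)|\varphi|^{q^{+}}/q^{+}\,dx$ is strictly positive.

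For $t\in(0,1)$ I would estimate the sign-definite contributions separately. On the positive side, using $t^{p(x,y)}\leq t^{p^{-}}$ and $t^{\overline{p}(x)}\leq t^{p^{-}}$ (both valid since $t<1$ and the exponents exceed $p^{-}$), both the Gagliardo integral and the $\overline{p}$-volume integral are bounded above by $C_1 t^{p^{-}}$, where $C_1$ depends only on $\varphi$ and is finite because $\varphi\in X$. On the negative side, using $t^{q(x)}\geq t^{q^{+}}$ (valid since $t<1$) and $|\varphi|^{q(x)}\geq|\varphi|^{q^{+}}$ (valid since $|\varphi|\leq 1$), the potential term satisfies
\[
\lambda\int_\Omega V(x)\frac{(t|\varphi|)^{q(x)}}{q(x)}\,dx\geq \lambda c_0\, t^{q^{+}}.
\]
Combining these two bounds yields $I(t\varphi)\leq t^{q^{+}}\bigl(C_1\, t^{p^{-}-q^{+}}-\lambda c_0\bigr)$, which is strictly negative for all small enough $t>0$ provided $q^{+}<p^{-}$.

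The main obstacle is reconciling the global comparison $q^{+}<p^{-}$ with the literal pointwise hypothesis $q(x)<\overline{p}(x)=p(x,x)$ in (F): because $p^{-}=\min_{\R^{2N}}p$ may strictly undercut $\overline{p}^{-}:=\min_{\close}p(x,x)$, the literal hypothesis does not directly yield $q^{+}<p^{-}$. I expect the argument to rely on interpreting (F) so that $q^{+}<p^{-}$ holds globally (as is standard in analogous variable-exponent fractional problems); if only the pointwise inequality is available, one would have to refine the estimates by choosing $\supp\varphi$ inside a tiny ball around some $x_0\in\Omega_0$, where continuity forces the locally relevant exponents (in the near-diagonal Gagliardo integral and in the volume integrals) to cluster around $\overline{p}(x_0)>q(x_0)$, and by controlling the far-field Gagliardo tail through the decay $|x-y|^{-N-sp(x,y)}$ together with the smallness of $|\supp\varphi|$.
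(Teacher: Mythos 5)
Your main line of argument requires the global comparison $q^{+}<p^{-}$, which, as you note yourself, is not a consequence of hypothesis (F): the pointwise inequality $q(x)<\overline{p}(x)=p(x,x)$ compares $q$ only with the diagonal values of $p$, whereas $p^{-}$ is the infimum of $p$ over all of $\R^{2N}$ and may sit strictly below $\overline{p}$ everywhere on $\overline{\Omega}$. Thus the bound $I(t\varphi)\le t^{q^{+}}\bigl(C_{1}\,t^{p^{-}-q^{+}}-\lambda c_{0}\bigr)$ does not force $I(t\varphi)<0$, and the argument does not close as written.

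Your closing remark, however, is essentially what the paper does, and it is worth seeing that it does \emph{not} require shrinking $\supp\varphi$ to a tiny ball nor any control of a far-field Gagliardo tail; the localization only affects the exponent in the potential term. Set $p_{0}^{-}:=\inf_{\overline{\Omega}_{0}}\overline{p}$ and $q_{0}^{-}:=\inf_{\overline{\Omega}_{0}}q$. Since $q<\overline{p}$ pointwise on $\overline{\Omega}_{0}$, evaluating at a minimizer of $\overline{p}$ gives $q_{0}^{-}<p_{0}^{-}$, and by continuity of $q$ one finds $\eps_{0}>0$ and an open set $\Omega_{1}\subset\Omega_{0}$ with $q_{0}^{-}+\eps_{0}<p_{0}^{-}$ and $q(x)\le q_{0}^{-}+\eps_{0}$ on $\Omega_{1}$. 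Then choose $\varphi\in C_{0}^{\infty}(\Omega_{0})$ with $0\le\varphi\le1$ and $\varphi\equiv1$ on $\overline{\Omega}_{1}$. Bounding the positive part of $I(t\varphi)$ above by a constant times $t^{p_{0}^{-}}$, and the potential term below by restricting the integral to $\Omega_{1}$ (where $\varphi=1$, $V>0$ and $t^{q(x)}\ge t^{q_{0}^{-}+\eps_{0}}$), yields $I(t\varphi)\le C_{1}\,t^{p_{0}^{-}}-\lambda C_{2}\,t^{q_{0}^{-}+\eps_{0}}$ with $C_{2}>0$, which is negative for small $t>0$. In short: you identified the right strategy, the genuine obstacle, and the correct repair, but the repair has to be carried out rather than left as a caveat, since the inequality you actually establish rests on a hypothesis the paper does not make.
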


\begin{proof}
    We denote by
    \begin{align*}
    p_{0}^{-}=\inf_{x\in \overline{\Omega}_{0}}\overline{p}(x) \quad \text{and} \quad
    q_{0}^{-}=\inf_{x\in \overline{\Omega}_{0}}q(x).
    \end{align*}
    Then, from condition (F), there exist $\eps_{0}>0$ and an open set $\Omega_{1}\subset \Omega_{0}$ such that
    \begin{align*}
    q_{0}^{-}+\eps_{0}<p_{0}^{-}
    \quad \text{and} \quad
    |q(x)-q_{0}^{-}|<\eps_0 \quad \text{for all }x\in \Omega_1.
    \end{align*}
    Thus
    \begin{align}\label{g21}
    q(x)\leq q_{0}^{-}+\eps_{0}<p_{0}^{-} \quad \text{for all } x\in \Omega_1.
    \end{align}
    Let $\varphi \in C^{\infty}_{0}(\Omega_0)$ such that $\overline{\Omega}_{1}\subset \mbox{supp}(\varphi)$, $\varphi=1$ for all $x \in \overline{\Omega}_{1}$ and $ 0 \leq \varphi \leq 1$ in $\Omega_0$. Then, it follows, for $t\in (0,1)$ small enough by applying \eqref{g21}, that
    \begin{align*}
    I(t\varphi)
    &=\int_{\R^{2N}\setminus(\mathcal{C}\Omega)^2}t^{p(x,y)} \frac{|\varphi(x)-\varphi(y)|^{p(x,y)}}{2p(x,y)|x-y|^{N+sp(x,y)}}\,dx\,dy
    +\int_{\mathcal{C}\Omega}t^{\overline{p}(x)} \frac{\beta(x)|\varphi|^{\overline{p}(x)}v}{\overline{p}(x)}\,dx\\
    &\quad +\int_{\Omega}t^{\overline{p}(x)}\frac{|\varphi|^{\overline{p}(x)}}{\overline{p}(x)}\,dx
    - \lambda \int_{\Omega}t^{q(x)}\frac{V(x)}{q(x)}|\varphi|^{q(x)}\,dx\\
    &\leq t^{p_{0}^{-}}\l(
    \int_{\R^{2N}\setminus(\mathcal{C}\Omega)^2}\frac{|\varphi(x)-\varphi(y)|^{p(x,y)}}{2p(x,y)|x-y|^{N+sp(x,y)}}\,dx\,dy
    +\int_{\mathcal{C}\Omega} \frac{\beta(x)|\varphi|^{\overline{p}(x)}v}{\overline{p}(x)}\,dx +\int_{\Omega}\frac{|\varphi|^{\overline{p}(x)}}{\overline{p}(x)}\,dx\r)\\
    &\quad -\lambda t^{q_{0}^{-}+\eps_{0}}  \int_{\Omega}\frac{V(x)}{q(x)}|\varphi|^{q(x)}\,dx<0.
    \end{align*}
    This shows the assertion.
\end{proof}

Now we are ready to state our main existence result.
\begin{theorem}
    Let hypotheses (S), (P), (F) and ($\beta$) be satisfied. Then there exists $\lambda^*>0$ such that for any $\lambda \in (0,\lambda^*)$ there exists at least one weak solution $u_\lambda \in X$ of problem \eqref{eq}.
\end{theorem}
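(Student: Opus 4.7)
The plan is to apply the direct method of the calculus of variations on the closed ball $\overline{B}_\rho(0) \subset X$, where $\rho$ is the radius furnished by Lemma \ref{pos}. Fix any $\lambda \in (0,\lambda^*)$ with $\lambda^*$ from Lemma \ref{pos}, so that
\begin{align*}
I(u) \geq a > 0 \quad \text{for every } u \in X \text{ with } \|u\|_X = \rho,
\end{align*}
and set
\begin{align*}
m := \inf_{u \in \overline{B}_\rho(0)} I(u).
\end{align*}
By Lemma \ref{neg}, choosing $t>0$ small enough so that $t\varphi \in \overline{B}_\rho(0)$, we obtain $m \leq I(t\varphi) < 0 < a$. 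In particular, any minimizer of $I$ over $\overline{B}_\rho(0)$ must lie in the open ball $B_\rho(0)$, since $I \geq a > 0$ on the sphere.

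The core step is to show that the infimum $m$ is attained. Let $(u_n) \subset \overline{B}_\rho(0)$ be a minimizing sequence. Since $X$ is reflexive (by the earlier proposition), along a subsequence $u_n \weak u_\lambda$ in $X$ with $u_\lambda \in \overline{B}_\rho(0)$. I would then split $I$ into its modular part
\begin{align*}
\Phi(u) = \int_{\R^{2N}\setminus(\mathcal{C}\Omega)^2}\frac{|u(x)-u(y)|^{p(x,y)}}{2p(x,y)|x-y|^{N+sp(x,y)}}\,dx\,dy + \int_{\Omega}\frac{|u|^{\overline{p}(x)}}{\overline{p}(x)}\,dx + \int_{\mathcal{C}\Omega}\frac{\beta(x)|u|^{\overline{p}(x)}}{\overline{p}(x)}\,dx
\end{align*}
and the subcritical term $\Psi(u) = \lambda \int_{\Omega} \frac{V(x)}{q(x)}|u|^{q(x)}\,dx$, and argue that $\Phi$ is weakly lower semicontinuous because its integrand is convex in $u$ and it is continuous on $X$ (being essentially a weighted modular), while $\Psi$ is weakly continuous. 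The latter follows from the compact embedding $X \hookrightarrow L^{r'(\cdot)q(\cdot)}(\Omega)$ given by Proposition \ref{compact} (since $1<r'(x)q(x)<p^*_s(x)$ by (F)): $u_n \to u_\lambda$ strongly in $L^{r'(\cdot)q(\cdot)}(\Omega)$, and H\"older's inequality with $V \in L^{r(\cdot)}(\Omega)$ together with Proposition \ref{pprp1} then yields $\Psi(u_n) \to \Psi(u_\lambda)$. Combining the two contributions gives $I(u_\lambda) \leq \liminf_{n\to\infty} I(u_n) = m$, so $u_\lambda$ is a minimizer.

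Finally, since $m < 0 < a$, we have $\|u_\lambda\|_X < \rho$, so $u_\lambda$ is an interior minimum of the $C^1$ functional $I$ on the open ball $B_\rho(0)$. Hence $I'(u_\lambda)=0$ in $X^*$, and by Proposition \ref{prp4} this critical point is a weak solution of \eqref{eq}, with $u_\lambda \not\equiv 0$ because $I(u_\lambda)<0=I(0)$. The step I expect to be the most delicate is the weak lower semicontinuity of the nonlocal seminorm part of $\Phi$; however, this reduces to convexity of $t \mapsto |t|^{p(x,y)}/p(x,y)$ in the finite measure space $\R^{2N}\setminus(\mathcal{C}\Omega)^2$ weighted by $|x-y|^{-N-sp(x,y)}$, together with Fatou's lemma along a subsequence converging a.e., which is exactly the type of argument already exploited in the proof of completeness of $X$ earlier in Section \ref{section_3}.
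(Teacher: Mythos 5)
Your proposal is correct, but it follows a genuinely different route from the paper. You use the direct method of the calculus of variations: minimize $I$ over the closed ball $\overline{B}_\rho(0)$, pass to a weakly convergent minimizing sequence, and split $I=\Phi-\Psi$ so that the modular part $\Phi$ is handled by convexity plus norm-continuity (hence sequential weak lower semicontinuity on a reflexive space), while the sign-changing term $\Psi$ is handled by the compact embedding $X\hookrightarrow L^{r'(\cdot)q(\cdot)}(\Omega)$ from Proposition \ref{compact} and a Nemytskii/H\"older argument giving weak continuity. The paper instead invokes Ekeland's variational principle on $\overline{B}(0,\rho)$ to extract a bounded (PS)-sequence $(u_n)$ with $I(u_n)\to m$ and $I'(u_n)\to 0$, uses the same compact embedding to kill the $V$-term in $\langle I'(u_n)-I'(u_\lambda),u_n-u_\lambda\rangle$, and then applies the $(\Ss_+)$-property of $\rho'$ from Lemma \ref{ss}(iii) to upgrade weak to strong convergence $u_n\to u_\lambda$ in $X$. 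Your argument is more elementary in that it never needs Ekeland nor Lemma \ref{ss}(iii) and relies only on the convexity of the integrands $t\mapsto |t|^{p}/p$ (with $\beta\geq 0$), together with the already-established reflexivity and compact embedding; the paper's route is the more robust template, since $(\Ss_+)$-type arguments also work when the principal part fails to be convex. Both establish that the minimizer is interior ($m<0<a$), hence a critical point, hence nontrivial and a weak solution by Proposition \ref{prp4}. One minor point worth making explicit in your write-up: $m>-\infty$ should be noted (it follows, as in the paper, from the lower bound $I(u)\geq \frac{1}{p^+3^{p^+-1}}\|u\|_X^{p^+}-\lambda\frac{2\alpha^{q^-}\|V\|_{r(\cdot)}}{q^-}\|u\|_X^{q^-}$ valid on the ball, or simply from $\Phi\geq 0$ and the boundedness of $\Psi$ on bounded sets), so that the minimizing sequence is well defined.
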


\begin{proof}
    Let $\lambda^*$ be defined as in Lemma \ref{pos} and choose $\lambda \in (0,\lambda^*)$. Again, invoking Lemma \ref{pos}, we can deduce that
    \begin{align*}
    \inf_{u\in \partial B(0,\rho)}I_{\lambda}(u)>0.
    \end{align*}
    On the other hand, by Lemma \ref{neg}, there exists $\varphi\in X$ such that $I(t\varphi)<0$ for all $t>0$ small enough. Moreover, by Lemma \ref{modular}, for $\|u\|_X<\rho$, we have
    \begin{align*}
    I(u)\geq \frac{1}{p^{+}3^{p^{+}-1}}\|u\|_{X}^{p^{+}}-\lambda\frac{2 \alpha^{q^{-}}\|V\|_{r(\cdot)}}{q^{-}}\|u\|_{X}^{q^{-}},
    \end{align*}
    see the proof of Lemma \ref{pos}. It follows that
    \begin{align*}
    -\infty<m= \inf_{u\in B(0,\rho)}I(u)<0.
    \end{align*}
    Applying Ekeland's variational principle to the functional $I\colon B(0,\rho)\to \R$, we can find a (PS)-sequence $(u_{n})_{n \in \N}\subseteq  B(0,\rho)$, that is,
    \begin{align*}
    I(u_{n})\to m \quad \mbox{and}\quad I'(u_{n})\to 0.
    \end{align*}
    It is clear that $(u_{n})_{n \in \N}$ is bounded in $X$. Thus there exists $u_\lambda\in X$ such that, up to a subsequence, $u_{n} \rightharpoonup u_\lambda$ in $X$. Using Proposition \ref{compact}, we see that $(u_{n})_{n \in \N}$ strongly converges to $u_\lambda$ in $L^{q(\cdot)}(\Omega)$. So, by H\"older's inequality and Proposition \ref{compact}, we obtain that
    \begin{align*}
    \lim_{n\to  +\infty} \int_{\Omega}V(x)|u_{n}|^{q(x)-2}u_{n}(u_{n}-u_\lambda)\,dx=0.
    \end{align*}
    On the other hand, since $(u_{n})_{n \in \N}$ is a (PS)-sequence, we infer that
    \begin{align*}
    \lim_{n\to  +\infty}\l\langle I'(u_{n})- I'(u_\lambda), u_{n}-u_\lambda\r\rangle=0.
    \end{align*}
    Combining this with Lemma \ref{ss}(iii), we can now conclude that $u_{n}\to u_\lambda$ in $X$. Hence,
    \begin{align*}
    I(u_\lambda)=m<0 \quad \text{and} \quad I'(u_\lambda)=0.
    \end{align*}
    We have thus shown that $u_\lambda$ is a nontrivial weak solution for problem \eqref{eq} whenever $\lambda \in (0,\lambda^*)$. This completes the proof.
\end{proof}



\begin{thebibliography}{99}

\bibitem{Autuori-Pucci-2013}
    G.\,Autuori, P.\,Pucci,
    {\it Elliptic problems involving the fractional {L}aplacian in {$\mathbb{R}^N$}},
    J.\,Differential Equations {\bf 255} (2013), no.\,8, 2340--2362.

\bibitem{Bahrouni-2018}
    A.\,Bahrouni,
    {\it Comparison and sub-supersolution principles for the fractional {$p(x)$}-{L}aplacian},
    J.\,Math.\,Anal. Appl.\,{\bf 458} (2018), no.\,2, 1363--1372.

\bibitem{Bahrouni-2017}
    A.\,Bahrouni,
    {\it Trudinger-{M}oser type inequality and existence of solution for perturbed non-local elliptic operators with exponential nonlinearity},
    Commun.\,Pure Appl.\,Anal.\,{\bf 16} (2017), no.\,1, 243--252.

\bibitem{Bahrouni-Ho-2020}
    A.\,Bahrouni, K.\,Ho,
    {\it Remarks on eigenvalue problems for fractional $p(\cdot)$-Laplacian}, submitted.

\bibitem{Bahrouni-Radulescu-2018}
    A.\,Bahrouni, V.\,D.\,R\u{a}dulescu,
    {\it On a new fractional Sobolev space and applications to nonlocal variational problems with variable exponent}, Discrete Contin.\,Dyn.\,Syst.\,Ser.\,S.\,{\bf 11} (2018), no.\,3, 379--389.

\bibitem{Bahrouni-Radulescu-Winkert-2019}
    A.\,Bahrouni, V.\,D.\,R\u{a}dulescu, P.\,Winkert,
    {\it Double phase problems with variable growth and convection for the Baouendi-Grushin operator}, submitted.


\bibitem{Caffarelli-Roquejoffre-Sire-2010}
    L.\,A.\,Caffarelli, J.-M.\,Roquejoffre, Y.\,Sire,
    {\it Variational problems for free boundaries for the fractional {L}aplacian},
    J.\,Eur.\,Math.\,Soc.\,(JEMS) {\bf 12} (2010), no.\,5, 1151--1179.

\bibitem{Caffarelli-Salsa-Silvestre-2008}
    L.\,A.\,Caffarelli, S.\,Salsa, L.\,Silvestre,
    {\it Regularity estimates for the solution and the free boundary of the obstacle problem for the fractional {L}aplacian},
    Invent.\,Math.\,{\bf 171} (2008), no.\,2, 425--461.

\bibitem{Caffarelli-Silvestre-2007}
    L.\,Caffarelli, L.\,Silvestre,
    {\it An extension problem related to the fractional {L}aplacian},
    Comm.\,Partial Differential Equations {\bf 32} (2007), no.\,7-9, 1245--1260.

\bibitem{Del-Pezzo-Rossi-2017}
    L.\,M.\,Del Pezzo, J.\,D.\,Rossi,
    {\it Traces for fractional {S}obolev spaces with variable exponents},
    Adv.\,Oper.\,Theory {\bf 2} (2017), no.\,4, 435--446.


\bibitem{Diening-Harjulehto-Hasto-Ruzicka-2011}
    L.\,Diening, P.\,Harjulehto, P.\,H\"{a}st\"{o}, M.\,R$\mathring{\text{u}}$\v{z}i\v{c}ka,
    {\it Lebesgue and Sobolev Spaces with Variable Exponents},
    Springer-Verlag, Heidelberg, 2011.


\bibitem{Dipierro-Ros-Oton-Valdinoci-2017}
    S.\,Dipierro, X.\,Ros-Oton, E.\,Valdinoci,
    {\it Nonlocal problems with {N}eumann boundary conditions},
    Rev.\,Mat. Iberoam.\,{\bf 33} (2017), no.\,2, 377--416.

\bibitem{Fan-Zhao-2001}
    X.\,Fan, D.\,Zhao,
    {\it On the spaces {$L^{p(x)}(\Omega)$} and {$W^{m,p(x)}(\Omega)$}},
    J.\,Math.\,Anal.\,Appl.\,{\bf 263} (2001), no.\,2, 424--446.

\bibitem{Filippucci-Pucci-Radulescu-2008}
    R.\,Filippucci, P.\,Pucci, V.\,D.\,R\u{a}dulescu,
    {\it Existence and non-existence results for quasilinear elliptic exterior problems with nonlinear boundary conditions},
    Comm.\,Partial Differential Equations {\bf 33} (2008), no.\,4-6, 706--717.

\bibitem{Guan-2006}
    Q.-Y.\,Guan,
    {\it Integration by parts formula for regional fractional {L}aplacian},
    Comm.\,Math.\,Phys.\,{\bf 266} (2006), no.\,2, 289--329.

\bibitem{Ho-Kim-2019}
    K.\,Ho, Y.-H.\,Kim,
    {\it A-priori bounds and multiplicity of solutions for nonlinear elliptic problems involving the fractional {$p(\cdot)$}-{L}aplacian},
    Nonlinear Anal.\,{\bf 188} (2019), 179--201.

\bibitem{Kaufmann-Rossi-Vidal-2017}
    U.\,Kaufmann, J.\,D.\,Rossi, R.\,Vidal,
    {\it Fractional {S}obolev spaces with variable exponents and fractional {$p(x)$}-{L}aplacians},
    Electron.\,J.\,Qual.\,Theory Differ.\,Equ.\,{\bf 2017}, Paper No. 76, 10 pp.

\bibitem{Kovacik-Rakosnik-19991}
    O.\,Kov{\'a}{\v{c}}ik, J.\,R{\'a}kosn{\'{\i}}k,
    {\it On spaces {$L^{p(x)}$} and {$W^{k,p(x)}$}},
    Czechoslovak Math.\,J.\,{\bf 41(116)} (1991), no.\,4, 592--618.

\bibitem{Lindqvist-2006}
    P.\,Lindqvist,
    {\it Notes on the {$p$}-{L}aplace equation},
    Report.\,University of Jyv\"{a}skyl\"{a} Department of Mathematics and Statistics {\bf 102}, University of Jyv\"{a}skyl\"{a}, Jyv\"{a}skyl\"{a}, 2006.

\bibitem{Mezzomo-Bonaldo-Miyagaki-Hurtado-2020}
    L.\,M.\,Mezzomo Bonaldo, O.\,H.\,Miyagaki, E.\,J.\,Hurtado,
    {\it Multiplicity results for elliptic problems involving nonlocal integrodifferential operators without        Ambrosetti-Rabinowitz condition}, arXiv: https://arxiv.org/abs/2004.00416.

\bibitem{esaim} X. Mingqi, V.D. R\u adulescu, B. Zhang, {\it Combined effects for fractional Schr\"odinger-Kirchhoff systems with critical nonlinearities}, ESAIM Control Optim. Calc. Var. {\bf 24} (2018), no. 3, 1249-1273.

    \bibitem{cvpde} X. Mingqi, V.D. R\u adulescu, B. Zhang, {\it Fractional Kirchhoff problems with critical Trudinger-Moser nonlinearity}, Calc. Var. Partial Differential Equations {\bf 58} (2019), no. 2, Paper No. 57, 27 pp.

\bibitem{Molica-Bisci-Radulescu-2015}
    G.\,Molica Bisci, V.\,R\u{a}dulescu,
    {\it Ground state solutions of scalar field fractional {S}chr\"{o}dinger equations},
    Calc.\,Var. Partial Differential Equations {\bf 54} (2015), no.\,3, 2985--3008.

\bibitem{Molica-Bisci-Radulescu-Servadei-2016}
    G.\,Molica Bisci, V.\,D.\,Radulescu, R.\,Servadei,
    {\it Variational Methods for Nonlocal Fractional Problems},
    Encyclopedia of Mathematics and its Applications, vol. 162, Cambridge University Press, Cambridge, 2016.

\bibitem{Mugnai-Proietti-Lippi-2019}
    D.\,Mugnai, E.\,Proietti Lippi,
    {\it Neumann fractional {$p$}-{L}aplacian: eigenvalues and existence results},
    Nonlinear Anal.\,{\bf 188} (2019), 455--474.

%
\bibitem{Pucci-Xiang-Zhang-2015}
    P.\,Pucci, M.\,Xiang, B.\,Zhang,
    {\it Multiple solutions for nonhomogeneous {S}chr\"{o}dinger-{K}irchhoff type equations involving the fractional {$p$}-{L}aplacian in {$\mathbb{R}^N$}},
    Calc.\,Var.\,Partial Differential Equations {\bf 54} (2015), no.\,3, 2785--2806.

\bibitem{Radulescu-Repovs-2015}
    V.\,D.\,R\u{a}dulescu, D.\,D.\,Repov\v{s},
    {\it Partial Differential Equations with Variable Exponents. Variational Methods and Qualitative Analysis}, Monographs and Research Notes in Mathematics, CRC Press, Boca Raton, FL, 2015.

%

 \bibitem{ana} M. Xiang, V.D. R\u adulescu, B. Zhang, {\it Superlinear Schr\"odinger-Kirchhoff type problems involving the fractional $p$-Laplacian and critical exponent}, Adv. Nonlinear Anal. {\bf 9} (2020), no. 1, 690-709.

\bibitem{Warma-2016}
    M.\,Warma,
    {\it The fractional {N}eumann and {R}obin type boundary conditions for the regional fractional {$p$}-{L}aplacian},
    NoDEA Nonlinear Differential Equations Appl.\,{\bf 23} (2016), no.\,1, Art. 1, 46 pp.

\bibitem{Zhang-Zhang-2020}
    C.\,Zhang, X.\,Zhang,
    {\it Renormalized solutions for the fractional {$p(x)$}-{L}aplacian equation with {$L^1$} data},
    Nonlinear Anal.\,{\bf 190} (2020), 111610, 15 pp.

\end{thebibliography}
\end{document}